\documentclass[pdflatex,sn-mathphys-num]{sn-jnl}

\usepackage{graphicx}%
\usepackage{multirow}%
\usepackage{amsmath,amssymb,amsfonts}%
\usepackage{amsthm}%
\usepackage{mathrsfs}%
\usepackage[title]{appendix}%
\usepackage{xcolor}%
\usepackage{textcomp}%
\usepackage{manyfoot}%
\usepackage{booktabs}%
\usepackage{algorithm}%
\usepackage{algorithmicx}%
\usepackage{algpseudocode}%
\usepackage{listings}%
\usepackage{enumitem}
\theoremstyle{thmstyleone}%
\newtheorem{proposition}{Proposition}% 

\theoremstyle{thmstyletwo}%
\newtheorem{remark}{Remark}%

\newtheorem{experiment}{Test Case}[section]

\theoremstyle{thmstylethree}%

\newcommand{\lrp}[1]{\left( #1 \right)}
\newcommand{\lrc}[1]{\left\{ #1 \right\}}
\newcommand{\lrb}[1]{\left[ #1 \right]}
\newcommand{\lrv}[1]{\left| #1 \right|}

\newcommand{\norm}[1]{\left\| #1 \right\|}
\newcommand{\atob}[2]{#1, \ldots, #2}

\newcommand{\Dt}{{\Delta t}}

\newcommand{\K}{K}
\renewcommand{\L}{L}

\newcommand{\Kblock}[1]{\mathbf{#1}^{\lrp{\K}}}
\newcommand{\KblockDt}[1]{\mathbf{#1}^{\Dt, \lrp{\K}}}

\renewcommand{\j}{\ddot{\imath}}
\newcommand{\ii}{j}
\newcommand{\jj}{\ddot{\jmath}}

\newcommand{\T}{\mathbb{T}}
\newcommand{\U}{\mathcal{U}}
\newcommand{\V}{\mathcal{V}}

\newcommand{\Idx}{I}

\newcommand{\timeinterval}{\mathcal{I}}
\newcommand{\timeintervalpart}{\mathcal{I}}

\newcommand{\lengthtimeintervalpart}[1]{|\mathcal{I}_{#1}|}

\newcommand{\pardif}[2][\!]{\frac{\partial #1}{\partial #2}}

\newcommand{\tn}[1]{\textnormal{#1}}

\algrenewcommand{\algorithmiccomment}[1]{\# #1}

\begin{document}
	\thispagestyle{empty}
	\!
	
	\vspace{6cm}
	
	\begin{center}
		This preprint has not undergone peer review or any post-submission improvements or corrections. The Version of Record of this article is published in BIT Numerical Mathematics, and is available online at \url{https://doi.org/10.1007/s10543-026-01130-y}.
	\end{center}
	
	\setcounter{page}{0}
	
	\clearpage
	
	\title[Goal Oriented Splitting]{Splitting schemes for ODEs with goal--oriented error estimation}

	\author*[1]{\fnm{Erik} \sur{Weyl}}\email{erik.weyl@math.uni-wuppertal.de}
	%orcid 0009-0004-7111-7457
	
	\author[1]{\fnm{Andreas} \sur{Bartel}}\email{bartel@math.uni-wuppertal.de}
	%orcid 0000-0003-1979-179X
	%\equalcont{These authors contributed equally to this work.}
	%
	\author[2]{\fnm{Manuel} \sur{Schaller}}\email{manuel.schaller@math.tu-chemnitz.de}
	%orcid 0000-0002-8081-5108
	%\equalcont{These authors contributed equally to this work.}
	
	\affil*[1]{\orgdiv{Department of Mathematics and Computer Science}, \orgname{Bergische Universit\"at Wuppertal}, \orgaddress{\street{Gau\ss stra\ss e 20}, \city{Wuppertal}, \postcode{D-42119}, %\state{North Rhine--Westphalia}, 
			\country{Germany}}}
	
	\affil[2]{\orgdiv{Faculty of Mathematics}, \orgname{TU Chemnitz}, \orgaddress{\street{ Reichenhainer Stra\ss e 41}, \city{Chemnitz}, \postcode{09126}, 
			%\state{State}, 
			\country{Germany}}}

	\abstract{We present a hybrid a--priori/a--posteriori goal oriented error estimator for a combination of dynamic iteration-based solution of ordinary differential equations discretized by finite elements. Our novel error estimator combines estimates from classical dynamic iteration methods, usually used to enable splitting--based distributed simulation, and from the dual weighted residual method to be able to evaluate and balance both, the dynamic iteration error and the discretization error in desired quantities of interest. The obtained error estimators are used to conduct refinements of the computational mesh and as a stopping criterion for the dynamic iteration. In particular, we allow for an adaptive and flexible discretization of the time domain, where variables can be discretized differently to match both goal and solution requirements, e.g.\ in view of multiple time scales. 
		We endow the scheme with efficient solvers from numerical linear algebra to ensure its applicability to complex problems.
		Numerical experiments compare the adaptive approach to a uniform refinement.
	}
	
	\keywords{ODEs, dynamic iteration, goal oriented error estimation, adaptive mesh generation, finite elements in time.}

	\pacs[MSC Classification]{65L05, 65L50, 65L60, 65L70}
	
	\maketitle
	
	%----------------------------
	\section{Introduction}
	\label{sec:introduction}	
	%----------------------------
	In many applications, ordinary differential equations (ODEs) are used to model the temporal evolution of dynamic processes. In numerous real--world phenomena, multiple subsystems of different nature interact; mathematically, this interaction is typically represented by coupling individual models. The resulting coupled systems often exhibit pronounced differences in time scales, degrees of nonlinearity, or dimensionality.
	
	Dynamic iteration, see e.g. \cite{lelarasmee1982waveform}, is an approximation method that allows to numerically simulate these models by solving the subsystems in a decoupled, iterative manner. This enables the combination of dedicated solvers and facilitates the use of different time grids for the subsystems that respect the individual scales of the phenomena.
	
	In this work, we derive error estimators for dynamic iteration methods coupled with finite element schemes that allow for quantification of the error in a user--specified Quantity of Interest (QoI). In fact, we seek for a number of dynamic iterations and an (as coarse as possible) time grid such that the numerical solution approximates the true solution up to a given accuracy when measured in the QoI. The error is thereby composed of two different sources, the dynamic iteration error, and the finite element discretization error. 
	
	As a first ingredient we apply adaptive finite elements (FE) with independent discretizations for all components of the state. In recent years, it has been shown that these methods are indeed equivalent to certain Runge--Kutta schemes \cite{munoz2019explicit, munoz2019variational}. To estimate the error of the solution in a given QoI, and to perform adaptive refinements of the time grid, we use the well--established dual weighted residual (DWR) method \cite{becker2001optimal}. Thereby, one solves an adjoint problem, which yields sensitivity factors for the impact of errors through the lens of the QoI. Since the seminal paper mentioned before, goal-oriented methods have been developed in various directions \cite{Becker2000,bruchhauser2022goal,Estep1995, meidner2007adaptive,Gruene22}. As a second component, we develop a goal oriented estimator for the error caused by splitting the problem based on dynamic iteration theory \cite{burrage1995parallel}. 
	
	We briefly provide an overview of relevant literature and relate it to our novelties.
	The combination of multirate in time and DWR based error estimation and refinement has for instance been applied to coupled flow and transport problems \cite{bruchhauser2022goal}. %In this paper, 
	Here, we employ the more general multiadaptive approach \cite{logg2003multi} in which no similarity for grids of different solution components is proscribed. Moreover, the localized error estimators naturally lead to variable steps sizes for the components of the solution. Controlling the iteration and discretization error has been applied, e.g., in~\cite{becker1995adaptive}, but from the perspective of using the iterative solvers for the discrete problem. Another approach to the goal oriented error estimation of discretization and iteration errors was presented in~\cite{rannacher2013adaptive}. The difference to our approach is that there the problem was first discretized and the discrete problem solved iteratively. The resulting error estimator was based entirely on DWR theory.   In~\cite{chaudhry2016posteriori} for instance, a two--stage approach was presented in which the error estimators obtained in a first coarse stage were used to design the fine mesh. The fine mesh construction is based on groups of time steps to exploit error cancellation. The application of the DWR method to the problems arising from dynamic iteration has been investigated before to obtain error estimators~\cite{estep2012posteriori, chaudhry2013posteriori, chaudhry2015posteriori}.
	
	In this work, by adopting a view of the dynamic iteration process as a single system of ODEs, we obtain a novel error estimator based on a holistic view of the method.
	
	This paper proceeds as follows: the splitting scheme and our interpretation in the light of the DWR method are introduced in Section~\ref{sec:unified}. In Section~\ref{sec:difem}, we describe the multirate discretization of the split problems derived before. Subsequently, Section~\ref{sec:errorest} deals with the implementation details and the balancing of error estimators for splitting and discretization. Numerical results are reported in Section~\ref{sec:experiments}. Finally, a conclusion and further possible research direction are given in Section~\ref{sec:conclusion}.

	%----------------------------
	\section{Global view on dynamic iteration} % as dynamical system
	\label{sec:unified}
	%----------------------------
	In this work, we consider the following problem formulation: let $m \in \mathbb{N}$ and time interval $\timeinterval := \lrb{t_0, t_n} \subset \mathbb{R}$ be given. We seek $U \in \mathcal{U} := H^1 \lrp{\timeinterval, \mathbb{R}^m}$ which satisfies the initial value problem
	\begin{equation}
		\begin{split}
			\dot{U} \lrp{t} + B U \lrp{t}	& = Y \lrp{t}, \quad t \in \timeinterval\\
			U \lrp{t_0}						& = U_{t_0},
		\end{split} \label{eq:1010}
	\end{equation}
	with given coefficient matrix $B \in \mathbb{R}^{m \times m}$, continuous right--hand side $Y : \timeinterval \rightarrow \mathbb{R}^m$ and initial value $U_{t_0} \in \mathbb{R}^m$.
	Furthermore, to streamline the presentation and to not hide the main novelties behind technical details, we consider a discrete quantity of interest~(QoI) 
	\begin{equation}
		J \lrp{U} := \sum_{r = 1}^R J_r \cdot U \lrp{\tau_r} \label{eq:1020}
	\end{equation}
	with $R\in \mathbb{N}$ time points of interest $\tau_r\in \mathcal{I}$ and given weight vectors $J_r\in\mathbb{R}^m$ for $r = 1, \ldots, R$. We assume that $\tau_R = t_n$ which can always be done w.l.o.g.\ by setting $J_R=0$. 
	
	Next, we introduce the splitting and dynamic iteration (Section~\ref{subsec:dynamic-iteration}) and view the whole process of the iterations together as a stacked system (Section~\ref{subsec:stacked-view}). This novel view aims at providing the means to estimate the error in the QoI $J$ and adaptively refine the discretization in order to achieve accurate results at low cost.

	%-------------------------
	\subsection{Construction of splitting schemes}
	\label{subsec:dynamic-iteration}
	%-------------------------
	For solving the full system~\eqref{eq:1010}, we use a dynamic iteration (DI) scheme to iteratively compute an approximation by solving subsystems. Dynamic iteration---also waveform relaxation---was first proposed in \cite{lelarasmee1982waveform}. It translates ideas of iterative methods for linear systems to systems of differential equations: some occurrences of the unknown $U_i$ in \eqref{eq:1010} are replaced with a known guess $\check{U}_i$ and then the resulting simplified problem is solved. One obtains an approximation $\hat{U}$ to $U$. By
	iteration, it is possible to construct a sequence, which convergences to the solution $U$, see~\cite{burrage1995parallel}.
	
	Given an initial waveform $U_0 \in \mathcal{U}$ satisfying $U_0 \lrp{t_0} = U_{t_0}$, e.g. $U_0 \equiv U_{t_0}$, a DI splitting scheme for the ODE initial value problem \eqref{eq:1010} may be constructed by choosing a splitting matrix $S \in \lrc{0, 1}^{m \times m}$ and defining
	\begin{align}
		\hat{B} := S \ast B \in \mathbb{R}^{m\times m}, & & \check{B} := B - \hat{B} \label{eq:2010}
	\end{align}
	where $\ast$ denotes elementwise multiplication. Then, for $k \geq 1$, $U_k$ is defined from $U_{k-1}$ as solution of the initial value problem
	\begin{equation}
		\begin{split}
			\dot{U}_k \lrp{t} + \hat{B} U_k \lrp{t} & = Y \lrp{t} - \check{B} U_{k - 1} \lrp{t}, \quad t \in \timeinterval\\
			U_k \lrp{t_0}	& = U_{t_0}.
		\end{split} \label{eq:2020}
	\end{equation}
	Notice, if the entry $s_{i, j}$ of $S$ is 0, the unknown $u_{k, j}$ is replaced by the previous iterate $u_{k - 1, j}$ in $i$--th equation of \eqref{eq:2020}. Therefore, $S = 1^{m \times m}$ corresponds to the original, fully coupled Setting \eqref{eq:1010}.		
	Prominent splitting schemes are:
	\begin{description}
		\item[Jacobi splitting.] Let $S = I$. In this case, during each solve, the equations are decoupled from one another making it possible to solve them all in parallel.
		\item[Gau\ss--Seidel splitting.] Let $S$ be lower triangular. This yields a sequential scheme where new results are immediately used for the computation of the next equation.
	\end{description}		
	Further splitting methods as block splitting, Picard iteration, underrelaxation and overlapping  can be also encoded via a splitting matrix $S$ and may be used in our proposed framework. In particular, overlapping can improve convergence rates~\cite{Jeltsch1995}.       
	
	Considering the split model~\eqref{eq:2020} instead of \eqref{eq:1010} results in a splitting error 
	\begin{equation}\label{eq:splitting-error}
		E_k \lrp{t} := U \lrp{t} - U_k \lrp{t} .
	\end{equation}
	The following result provides an estimate of the splitting error in the QoI and hence provides a goal oriented perspective. By $\norm{\cdot}$, we denote the Euclidean vector norm and the induced matrix norm.
	
	\begin{proposition}
		\label{prop:errDI}
		We consider problem~\eqref{eq:1010}. Given a splitting matrix $S$ with split model~\eqref{eq:2020}, then
		the error $E_{\K}$~\eqref{eq:splitting-error} after $\K$ iterations 
		%is bounded by
		% 
		leads to an error in the QoI \eqref{eq:1020} $J \lrp{E_{\K}}$, which is bounded as follows
		\begin{equation}
			\nu := \lrv{J \lrp{E_{\K}}} \leq \lrp{- \frac{L_2}{L_1}}^{\K} \sup_{t \in \timeinterval} \lrp{\norm{E^{\lrp{0}}}} \sum_{r = 1}^R \norm{J_r} \lrp{1 - e^{L_1 t_r} \sum_{k = 0}^{\K - 1} \frac{\lrp{- L_1 t_r}^k}{k !}} \label{eq:2030}
		\end{equation}
		with $L_1 = \mu_{\max} \lrp{\frac{\hat{B} + \hat{B}^T}{2}}$,
		$L_2 = \norm{\check{B}}$.
	\end{proposition}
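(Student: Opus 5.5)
Subtracting the split model~\eqref{eq:2020} from the original problem~\eqref{eq:1010}, and using $B = \hat{B} + \check{B}$, shows that the error $E_k = U - U_k$ satisfies
\begin{equation*}
	\dot{E}_k(t) + \hat{B} E_k(t) = - \check{B} E_{k-1}(t), \qquad E_k(t_0) = 0, \qquad k \geq 1 .
\end{equation*}
The plan is to derive a recursive pointwise bound on $\norm{E_k(t)}$ via a logarithmic-norm (one-sided Lipschitz) estimate and to iterate it $\K$ times. Throughout I take $t_0 = 0$, which is how I read $t_r$ in \eqref{eq:2030}: it denotes the time points $\tau_r$ measured from $t_0$. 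Finally, linearity of $J$ and Cauchy--Schwarz give $\lrv{J(E_{\K})} \leq \sum_r \norm{J_r}\,\norm{E_{\K}(\tau_r)}$, so it suffices to bound $\norm{E_{\K}(\tau_r)}$ for each $r$.

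\textbf{Step 1: one-step estimate.} Take the inner product of the error equation with $E_k$. Since $E_k^T \hat{B} E_k = E_k^T \tfrac{\hat{B}+\hat{B}^T}{2} E_k \geq L_1 \norm{E_k}^2$ with $L_1 = \mu_{\max}$, the usual argument gives the differential inequality
\begin{equation*}
	\frac{d}{dt}\norm{E_k} \leq -L_1 \norm{E_k} + L_2 \norm{E_{k-1}}
\end{equation*}
(made rigorous, e.g., through $\norm{E_k}^2+\varepsilon$). Gronwall, or equivalently the variation-of-constants formula with the bound $\norm{e^{-\hat{B}s}} \leq e^{-L_1 s}$, then yields
\begin{equation*}
	\norm{E_k(t)} \leq L_2 \int_0^t e^{-L_1 (t-s)} \norm{E_{k-1}(s)}\, ds .
\end{equation*}

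\textbf{Step 2: iteration.} Insert $\norm{E_0(s)} \leq \sup_{\timeinterval} \norm{E^{(0)}}$ and apply the Volterra operator $\K$ times. The $\K$-fold kernel is $L_2^{\K}\, e^{-L_1 (t-s)} (t-s)^{\K-1}/(\K-1)!$, so
\begin{equation*}
	\norm{E_{\K}(t)} \leq \sup \norm{E^{(0)}} \, L_2^{\K} \int_0^t e^{-L_1 \sigma} \frac{\sigma^{\K-1}}{(\K-1)!}\, d\sigma .
\end{equation*}
This is an incomplete gamma integral. Repeated integration by parts gives
\begin{equation*}
	\int_0^t e^{-L_1 \sigma} \frac{\sigma^{\K-1}}{(\K-1)!}\, d\sigma = L_1^{-\K}\Bigl(1 - e^{-L_1 t}\sum_{k=0}^{\K-1} \frac{(L_1 t)^k}{k!}\Bigr).
\end{equation*}

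\textbf{Step 3: matching the stated form.} I must reconcile this with the sign conventions in \eqref{eq:2030}, where the prefactor is $(-L_2/L_1)^{\K}$ and the exponentials carry $e^{L_1 t_r}$ and $(-L_1 t_r)^k$. Substituting $L_1 \to -L_1$ in my expression turns $L_1^{-\K}(1 - e^{-L_1 t}\sum (L_1 t)^k/k!)$ into $(-L_1)^{-\K}(1 - e^{L_1 t}\sum (-L_1 t)^k/k!)$. Multiplying by $L_2^{\K}$ then gives exactly $(-L_2/L_1)^{\K}(1 - e^{L_1 t_r}\sum_{k<\K}(-L_1 t_r)^k/k!)$. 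So the stated formula corresponds to the growth convention $\frac{d}{dt}\norm{E_k} \leq L_1\norm{E_k} + L_2\norm{E_{k-1}}$. That is what one gets for the form $\dot{E} = -\hat{B}E + \dots$ if $L_1$ is interpreted as the logarithmic norm $\mu(-\hat{B})$, i.e.\ as an upper bound for the growth rate.

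\textbf{Main obstacle.} I expect the main difficulty to lie in this bookkeeping of signs, not in the analysis. I would first carry out Step 1 with $\mu := \mu_{\max}$ of the symmetric part of $-\hat{B}$ (the growth rate), then check that the final expression equals $L_2^{\K}\int_0^{t}e^{\mu\sigma}\sigma^{\K-1}/(\K-1)!\,d\sigma$, which is positive for every sign of $\mu$. This matches the sign pattern of \eqref{eq:2030} with $L_1 = \mu$. The remaining steps are the routine Gronwall iteration and the incomplete gamma identity; the identity can be verified by induction on $\K$ (differentiate both sides in $t$).
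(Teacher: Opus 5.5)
Your argument is correct once the constant is fixed as in your last paragraph, and it takes a different route from the paper. The paper does no Gronwall computation itself. It checks that $F(U_k,U_{k-1},t)=Y-\hat{B}U_k-\check{B}U_{k-1}$ is one-sided Lipschitz in $U_k$ (constant $L_1$) and Lipschitz in $U_{k-1}$ (constant $L_2$). It then quotes Theorem 7.9.3 of Burrage's monograph for $\sup_{[t_0,t_r]}\|E_K\|$ and enlarges the supremum of $\|E_0\|$ to all of $\mathcal{I}$.

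Your ingredients are exactly what that theorem packages: the variation-of-constants recursion $\|E_k(t)\|\le L_2\int_0^t e^{L_1(t-s)}\|E_{k-1}(s)\|\,ds$ (in the corrected convention below), the $K$-fold kernel $L_2^K e^{L_1\sigma}\sigma^{K-1}/(K-1)!$, and the incomplete-gamma identity. So your proof is self-contained and shows precisely where the sign of $L_1$ enters. The integral form also covers $L_1=0$, where \eqref{eq:2030} is only defined as a limit. The paper's route is shorter and needs nothing beyond the two Lipschitz-type constants.

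However, what you call sign bookkeeping is a genuine correction of the printed constant, and Step 1 as written is false. With $H=(\hat{B}+\hat{B}^\top)/2$ one has $E^\top\hat{B}E\ge\mu_{\min}(H)\|E\|^2$, not $\mu_{\max}(H)\|E\|^2$. Likewise $\|e^{-\hat{B}s}\|\le e^{-\mu_{\min}(H)s}$ for $s\ge0$. Since $\mu_{\max}(-H)=-\mu_{\min}(H)$, which in general differs from $-\mu_{\max}(H)$, replacing $L_1$ by $-L_1$ is not a relabeling.

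The consistent choice, which you reach at the end, is $L_1:=\mu_{\max}(-H)$, the logarithmic norm of $-\hat{B}$. With it your argument goes through and lands directly on \eqref{eq:2030}. It is also the meaning the paper's Remark presupposes (dissipative iff $L_1<0$).

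With the printed $L_1=\mu_{\max}(H)$ the bound actually fails. Take the Jacobi splitting of $B=\begin{pmatrix}-1&1\\1&-1\end{pmatrix}$, so $\hat{B}=-I$, printed $L_1=-1$, $L_2=1$. Let $K=1$, $t_0=0$, and $J(U)=v\cdot U(t_n)$ with $v=(1,1)^\top/\sqrt{2}$. The right-hand side of \eqref{eq:2030} is then $(1-e^{-t_n})\sup\|E_0\|$. But $E_0(t)=\min(t/\varepsilon,1)\,v$ gives $|J(E_1)|\to e^{t_n}-1$ as $\varepsilon\to0$, which is larger.

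The paper's proof contains the same slip: it identifies the one-sided Lipschitz constant of $F$ with the logarithmic norm of $\hat{B}$ instead of $-\hat{B}$. So state your result explicitly with $L_1=\mu_{\max}\bigl(-(\hat{B}+\hat{B}^\top)/2\bigr)$. Write Step 1 as the upper bound $-E^\top\hat{B}E\le L_1\|E\|^2$, so that the exponents in Steps 1--2 carry $+L_1$ and Step 3 is just the identity with $a=-L_1$.
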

	
	\begin{proof}
		Using the triangle and Cauchy--Schwarz inequalities, we get
		\begin{equation}
			\lrv{J \lrp{E_{\K}}} = \lrv{\sum_{r = 1}^R J_r \cdot E_{\K} \lrp{t_r}} \leq \sum_{r = 1}^R \lrv{J_r \cdot E_{\K} \lrp{t_r}} \leq \sum_{r = 1}^R \norm{J_r} \norm{E_{\K} \lrp{t_r}} . \label{eq:2032}
		\end{equation}
		It now remains to bound $\norm{E_{\K} \lrp{t_r}}$. Thereby, we rewrite \eqref{eq:2020} as
		\begin{equation*}
			\dot{U}_k \lrp{t} = F \lrp{U_k, U_{k - 1}, t} := Y \lrp{t} - \hat{B} U_k \lrp{t} - \check{B} U_{k - 1} \lrp{t}
		\end{equation*}
		and note that the right hand side $F$ satisfies the following Lipschitz conditions:
		\begin{align*}
			\lrp{F \lrp{U_a, \cdot, \cdot} - F \lrp{U_b, \cdot, \cdot}} \cdot \lrp{U_a - U_b}	& \leq L_1 \norm{U_a - U_b}^2\\
			\norm{F \lrp{\cdot, U_a, \cdot} - F \lrp{\cdot, U_b, \cdot}}						& \leq L_2 \norm{U_a - U_b}
		\end{align*}           
		The constants $L_1$ and $L_2$ in this case are the logarithmic norm of $\hat{B}$ and the matrix norm of $L_2$ respectively, as given above.
		
		Using \cite[Theorem 7.9.3]{burrage1995parallel}, we have for the error in the dynamic iteration %then get
		\begin{equation*}
			\sup_{t \in \lrb{t_0, t_r}} \lrp{\norm{E^{\lrp{k}} \lrp{t}}} \leq \lrp{- \frac{L_2}{L_1}}^k \lrp{1 - e^{L_1 t_r} \sum_{j = 0}^{k - 1} \frac{\lrp{- L_1 t_r}^j}{j !}} \sup_{t \in \lrb{t_0, t_r}} \lrp{\norm{E^{\lrp{0}}}} .
		\end{equation*}
		We can use this bound in \eqref{eq:2032} for each $t_r$. 
		Due to $\lrb{t_0, t_r}\subseteq \timeinterval$, we have
		%By using
		\begin{equation*}
			\sup_{t \in \lrb{t_0, t_r}} \lrp{\norm{E^{\lrp{k}} \lrp{t}}} \leq \sup_{t \in \timeinterval} \lrp{\norm{E^{\lrp{k}} \lrp{t}}},
		\end{equation*}
		which gives the stated error bound~\eqref{eq:2030}.
	\end{proof}
	
	\begin{remark}
		(i) In case of a dissipative system, i.e. $L_1 < 0$, we have 
		\begin{equation*}
			1 - e^{L_1 t_r} \sum_{k = 0}^{\K - 1} \frac{\lrp{- L_1 t_r}^k}{k !} = e^{L_1 t_r} \sum_{j = k}^{\infty} \frac{\lrp{- L_1 t_r}^j}{j !} \leq e^{L_1 t_r} \sum_{j = 0}^{\infty} \frac{\lrp{- L_1 t_r}^j}{j !} = 1 .
		\end{equation*}
		If we additionally have $- L_1 > L_2$, then~\eqref{eq:2030} implies that the dynamic iteration is contractive. More precisely, by choosing $S$ such that the coupling ratio $\lrv{\frac{L_2}{L_1}}$ is minimized, the speed of convergence can be improved.
		
		(ii) In the more general case where $L_1$ may be positive, it is still possible to obtain proof of asymptotic convergence \cite[Theorem 7.9.2]{burrage1995parallel}. However, in practice the error may blow up beyond the maximum machine number during early iterations~\cite{Bartel2023}.
	\end{remark}
	
	In practice, the number of iterations is limited. In this work, we aim to balance the splitting error estimator with the discretization error estimator introduced below. The details are given in Section~\ref{sec:errorest} (there, we also use the abbreviation $\nu$ for the splitting error in $J$).

	%---------------------
	\subsection{A holistic variational formulation for dynamic iteration} % methods
	\label{subsec:stacked-view}
	%---------------------
	
	For a total number of dynamic iteration steps $\K\in\mathbb{N}$, we stack the respective ODEs into a single system:
	\begin{equation}
		\lrp{\begin{array}{@{}c@{}} \dot{U}_1 \lrp{t}\\ \dot{U}_2 \lrp{t}\\ \vdots\\ \dot{U}_{\K - 1} \lrp{t} \\ \dot{U}_{\K} \lrp{t} \end{array}}
		+
		\lrb{\begin{array}{ccccc}
				\hat{B}		& 0			& \ldots	& 0			& 0\\
				\check{B}	& \hat{B}	& \ddots	& 			& 0\\
				0 		    & \ddots	& \ddots	& \ddots	& \vdots\\
				\vdots		& \ddots	& \ddots	& \hat{B}	& 0\\
				0			& \ldots	& 0	& \check{B}	& \hat{B}
		\end{array}}
		\lrp{\begin{array}{@{}c@{}} U_1 \lrp{t}\\ U_2 \lrp{t}\\ \vdots\\ U_{\K - 1} \lrp{t} \\ U_{\K} \lrp{t} \end{array}}
		= 
		\lrp{\begin{array}{c@{}l} Y \lrp{t} & - \check{B} U_0 \lrp{t}\\ Y \lrp{t} &\\  \vdots &\\ Y \lrp{t} &\\ Y \lrp{t} & \end{array}}
		\label{eq:2040}
	\end{equation}
	with initial waveform $U_0 \lrp{t}$.
	This allows for a more rigorous derivation of the DWR based error estimators later (Section~\ref{sec:DWR}). Additionally, it makes the flow of information --- forward through iterations for the primal problem and backward in inverse order for the dual --- more transparent. It also highlights the influence of the initial waveform. On the other hand, for the practical evaluation of the variational problem, it is better to return to the individual steps.

	We rewrite \eqref{eq:2040} more compactly as
	\begin{equation}
		\Kblock{\dot{U}} \lrp{t} + \Kblock{B} \Kblock{U} \lrp{t} = \Kblock{Y} \lrp{U_0; t}\label{eq:2042}
	\end{equation}
	with initial value
	\begin{equation*}
		\Kblock{U} \lrp{t_0} = \Kblock{U}_{t_0} := \lrp{U_{t_0}, \ldots, U_{t_0}}^\top  .
	\end{equation*}
	For the goal functional, we apply the QoI defined in \eqref{eq:1020} to the final result, discarding all previous ones:
	\begin{equation*}
		\Kblock{J} \lrp{\Kblock{U}} := J \lrp{U_{\K}} .
	\end{equation*}

	To obtain a weak formulation of \eqref{eq:2040}, we consider the test space $\mathcal{V} \subseteq L^{\infty} \lrp{\timeinterval, \mathbb{R}^m}$, use stacked solutions 
	$\Kblock{U} \in \Kblock{\mathcal{U}} := \mathcal{U} \times \ldots \times \mathcal{U}$ and stacked test functions $\Kblock{V} \in \Kblock{\mathcal{V}} := \mathcal{V} \times \ldots \times \mathcal{V}$ to define
	\begin{align*}
		\Kblock{f} \!\lrp{\Kblock{U}, \Kblock{V}}	
		& \!:= \!\!\int_\timeinterval\!\! \lrp{\Kblock{\dot{U}} \!\lrp{t}
			+ \Kblock{B} \Kblock{U} \!\lrp{t}} \cdot \Kblock{V} \lrp{t} \tn{d} t + \Kblock{U} \lrp{t_0} \cdot \Kblock{V} \lrp{t_0}
		\\
		\Kblock{g} \!\lrp{U_0; \Kblock{V}}				
		& \!:= \!\!\int_\timeinterval\!\! \Kblock{Y} \lrp{U_0; t} \cdot \Kblock{V} \lrp{t} \tn{d} t 
		+ \Kblock{U}_{t_0} \cdot \Kblock{V} \lrp{t_0} .
	\end{align*}
	With these definitions, we have:
	
	% - - - - - - - - - - - - - - - - - - - - - -   
	\textbf{Stacked dual variational problem.}
	% - - - - - - - - - - - - - - - - - - - - - -      
	Given the initial value problem $\eqref{eq:1010}$.
	Let an initial waveform $U_0 \in \mathcal{U}$ be given which satisfies the initial condition in \eqref{eq:1010}.
	Then, the stacked variational problem is to find $\Kblock{U} \in \Kblock{\mathcal{U}}$ such that for every test function $\Kblock{V} \in \Kblock{\mathcal{V}}$ holds %the equality
	\begin{equation}
		\Kblock{f} \lrp{\Kblock{U}, \Kblock{V}} = \Kblock{g} \lrp{U_0; \Kblock{V}} 
		.
		\label{eq:2050}
	\end{equation}
	This stacked view now provides the basis for the arguments in the following section.
	
	%-----------------------------------------------			
	\subsection{Dual weighted residual method}
	\label{sec:DWR}	
	%-----------------------------------------------	
	Using the variational formulation~\eqref{eq:2050}, we derive a goal oriented error estimator for the dynamic iteration method~\eqref{eq:2040} based on the dual weighted residual (DWR) method~\cite{becker2001optimal}. 
	
	We briefly provide the derivation, which is standard for goal oriented refinements~\cite{becker2001optimal}.  To this end, we reinterpret the variational problem as the trivial constrained optimization problem to minimize the quantity of interest $\Kblock{J} \lrp{\Kblock{U}}$ for $\Kblock{U} \in \Kblock{\mathcal{U}}$ subject to~\eqref{eq:2050}. The solution to this problem can be determined by finding a stationary point $\lrp{\Kblock{U}_s, \Kblock{Z}_s} \in \Kblock{\mathcal{U}} \times \Kblock{\mathcal{U}}$ of the Lagrangian function
	\begin{equation*}
		L \lrp{\Kblock{U}, \Kblock{Z}} = \Kblock{J} \lrp{\Kblock{U}} + \Kblock{g} \lrp{\Kblock{Z}} - \Kblock{f} \lrp{\Kblock{U}, \Kblock{Z}} .
	\end{equation*}
	Consequently, first--order optimality condition is given by the two equations 
	\begin{align}
		\left.
		\pardif[L]{\Kblock{Z}} \lrp{\Kblock{U}, \Kblock{Z}} 
		\right|_{\Kblock{Z}=\Kblock{V}}
		& = \Kblock{g} \lrp{\Kblock{V}} - \Kblock{f} \lrp{\Kblock{U}, \Kblock{V}} = 0, 
		\label{eq:2060}\\
		\left.
		\pardif[L]{\Kblock{U}} \lrp{\Kblock{U}, \Kblock{Z}} 
		\right|_{\Kblock{U}=\Kblock{W}}
		& = \Kblock{J} \lrp{\Kblock{W}} - \Kblock{f} \lrp{\Kblock{W}, \Kblock{Z}} = 0  \label{eq:2062}
	\end{align}
	holding for all $\Kblock{V} \in \Kblock{\mathcal{U}}$ and $\Kblock{W} \in \Kblock{\mathcal{U}}$. We note that \eqref{eq:2060} is the (primal) variational equation \eqref{eq:2050} while \eqref{eq:2062} is the adjoint (or dual) equation for the adjoint (or dual) solution $\Kblock{Z}$.
	\vspace*{5pt}
	
	% - - - - - - - - - - - - - - - - - - - - - -      
	% \noindent
	\textbf{Stacked dual variational problem.}
	% \label{prob:adjoint}
	% - - - - - - - - - - - - - - - - - - - - - -      
	Find $\Kblock{Z} \in \Kblock{\mathcal{U}}$ such that for all $\Kblock{W} \in \Kblock{\mathcal{U}}$ holds
	\begin{equation}
		\Kblock{f} \lrp{\Kblock{W}, \Kblock{Z}} = \Kblock{J} \lrp{\Kblock{W}} \label{eq:2110}
	\end{equation}
	with
	\begin{equation*}
		\Kblock{f} \!\lrp{\Kblock{W}\!, \Kblock{Z}}	
		\!:=\!\! \int_\timeinterval \!\!\lrp{\Kblock{\dot{W}} \lrp{t} 
			+ \Kblock{B} \Kblock{W} \lrp{t}} \cdot \Kblock{Z}\! \lrp{t} \tn{d} t 
		+ \Kblock{W} \lrp{t_0} \cdot \Kblock{Z} \lrp{t_0} .
	\end{equation*}		
	Using integration by parts, we have for $\Kblock{f} \lrp{\Kblock{W}, \Kblock{Z}}$
	\begin{equation*}
		= \int_\timeinterval \lrp{- \Kblock{\dot{Z}} \lrp{t} 
			+ {\Kblock{B}}^\top \Kblock{Z} \lrp{t}} \cdot \Kblock{W} \lrp{t} \tn{d} t 
		+ \Kblock{W} \lrp{t_n} \cdot \Kblock{Z} \lrp{t_n} .
	\end{equation*}
	This shows that the adjoint problem has a terminal condition given by the weight $J_R$ at the end of $\timeinterval$: specifically $Z \lrp{t_n} = J_R$. Furthermore since the matrix $\Kblock{B}$ is transposed, the dynamic iteration steps are solved in reverse order.	
	
	In practice, we solve discrete versions of \eqref{eq:2060} and \eqref{eq:2062} with solutions in a finite--dimensional subspace. For these approximations, we have the following goal oriented error bound \cite[Proposition 2.2]{becker2001optimal}:
	
	\begin{proposition}
		\label{prop:DWR}
		Consider the weakly formulated primal and dual problems \eqref{eq:2060} and \eqref{eq:2062} and their respective analytic solutions $\Kblock{U}$ and $\Kblock{Z}$. Let $\KblockDt{U} \in \KblockDt{\U}$ be the solution to a discretized version of the primal problem. Then, the error in the quantity of interest,
		\begin{equation}
			\Kblock{J} \lrp{\KblockDt{E}} := \Kblock{J} \lrp{\Kblock{U}} - \Kblock{J} \lrp{\KblockDt{U}} \le \mu^{\Dt}
			\label{eq:2090}
		\end{equation}
		is bounded from above for any $\KblockDt{V} \in  \KblockDt{\mathcal{U}}$ with
		\begin{equation}
			\mu^{\Dt} \!:=\!\! \int_\timeinterval \!\!\lrp{\Kblock{Y} \lrp{t} \!-\! \Kblock{A} \KblockDt{\dot{U}} \lrp{t} \!-\! \Kblock{B} \KblockDt{U} \lrp{t}} \!\cdot\! \lrp{\Kblock{Z} \lrp{t} \!-\! \KblockDt{V} \lrp{t}} \tn{d} t .  \label{eq:2070}                
		\end{equation}
	\end{proposition}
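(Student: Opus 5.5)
The argument is the standard Galerkin-orthogonality computation behind the DWR method, carried out for the stacked problem. Since $\Kblock{J}$ and $\Kblock{f}$ are linear in the solution argument, the goal is to show that the inequality \eqref{eq:2090} in fact holds with equality, i.e.\ $\Kblock{J} \lrp{\KblockDt{E}} = \mu^{\Dt}$. We read $\Kblock{A}$ as the (block identity) matrix multiplying the time derivative, and we assume the discrete primal problem is a conforming Galerkin scheme: find $\KblockDt{U} \in \KblockDt{\U}$ with $\Kblock{f} \lrp{\KblockDt{U}, \KblockDt{V}} = \Kblock{g} \lrp{\KblockDt{V}}$ for all $\KblockDt{V}$ in the discrete test space. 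For a discontinuous Galerkin scheme, the jump terms at grid points would additionally enter the residual.

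\textbf{Step 1.} Write the error as a dual problem evaluation. Because $\Kblock{J}$ is linear, $\Kblock{J} \lrp{\KblockDt{E}} = \Kblock{J} \lrp{\Kblock{U} - \KblockDt{U}}$. Test the dual equation \eqref{eq:2110} with $\Kblock{W} = \Kblock{U} - \KblockDt{U}$. This choice is admissible in the $H^1$ setting, or in a broken $H^1$ space if the trial functions are discontinuous. It gives
\[
\Kblock{J} \lrp{\KblockDt{E}} = \Kblock{f} \lrp{\Kblock{U} - \KblockDt{U}, \Kblock{Z}} = \Kblock{g} \lrp{\Kblock{Z}} - \Kblock{f} \lrp{\KblockDt{U}, \Kblock{Z}},
\]
where the second equality uses the primal equation \eqref{eq:2060} with $\Kblock{V} = \Kblock{Z}$.

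\textbf{Step 2.} Apply Galerkin orthogonality. For any discrete $\KblockDt{V}$, the discrete primal equation gives $\Kblock{g} \lrp{\KblockDt{V}} - \Kblock{f} \lrp{\KblockDt{U}, \KblockDt{V}} = 0$. Subtracting this from the identity in Step 1 yields
\[
\Kblock{J} \lrp{\KblockDt{E}} = \Kblock{g} \lrp{\Kblock{Z} - \KblockDt{V}} - \Kblock{f} \lrp{\KblockDt{U}, \Kblock{Z} - \KblockDt{V}}.
\]

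\textbf{Step 3.} Insert the definitions of $\Kblock{f}$ and $\Kblock{g}$. The integral part is exactly the residual integral in \eqref{eq:2070}. What remains is the initial term
\[
\lrp{\Kblock{U}_{t_0} - \KblockDt{U} \lrp{t_0}} \cdot \lrp{\Kblock{Z} \lrp{t_0} - \KblockDt{V} \lrp{t_0}}.
\]
This term vanishes if the discrete solution satisfies the initial condition exactly, since then $\KblockDt{U} \lrp{t_0} = \Kblock{U}_{t_0}$. If instead the initial condition is imposed only weakly, the term has to be kept as an extra residual contribution, or it can be removed by choosing $\KblockDt{V}$ to interpolate $\Kblock{Z}$ at $t_0$. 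With that, $\Kblock{J} \lrp{\KblockDt{E}} = \mu^{\Dt}$, and in particular the upper bound \eqref{eq:2090} holds.

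\textbf{Main obstacle.} The only delicate point is the mismatch of spaces. The dual solution $\Kblock{Z}$ and the test space $\Kblock{\mathcal{V}} \subseteq L^\infty$ must be compatible with the discrete trial and test spaces. Concretely, one has to check two things: that the difference $\Kblock{U} - \KblockDt{U}$ is an admissible argument in \eqref{eq:2110}, and that the boundary and jump terms are handled consistently. The plan is to treat this following \cite[Proposition 2.2]{becker2001optimal}, assuming a conforming discretization, or otherwise adding the jump terms to $\mu^{\Dt}$.
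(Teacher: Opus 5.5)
Your proof is correct, and it gets there more directly than the paper does.

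\textbf{The paper's route.} The appendix works in the Becker--Rannacher Lagrangian framework. It writes the error as $L\lrp{\Kblock{U},\Kblock{Z}} - L\lrp{\KblockDt{U},\KblockDt{Z}}$ and represents this as half the dual residual plus half the primal residual. It then uses linearity to keep only $\rho\lrp{\KblockDt{U},\Kblock{Z}-\KblockDt{V}}$.

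\textbf{Your route.} You test the dual equation with the error, use the exact primal equation with $\Kblock{V}=\Kblock{Z}$, and subtract Galerkin orthogonality. This is exactly what the Lagrangian computation reduces to for linear problems.

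\textbf{What each buys.} The Lagrangian route extends to nonlinear problems, where a remainder term appears and the two residuals differ. Yours is shorter and shows that the bound is the identity $\Kblock{J}\lrp{\KblockDt{E}}=\mu^{\Dt}$. The paper's ``$\le$'' is never strict: in the linear case the quantities inside its two minima do not depend on $\KblockDt{W}$ or $\KblockDt{V}$.

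Your version also makes explicit two points the paper passes over.
\begin{enumerate}
\item The last equality of the appendix silently drops the initial-value term $\lrp{\Kblock{U}_{t_0}-\KblockDt{U}\lrp{t_0}}\cdot\lrp{\Kblock{Z}\lrp{t_0}-\KblockDt{V}\lrp{t_0}}$ that you isolate in Step~3. The paper's schemes use trial space $\U_a^{\Dt}$ or $\U_c^{\Dt}$ and test space $\U_b^{\Dt}$. Testing with the function of $\U_b^{\Dt}$ concentrated at $t_0$ gives $\KblockDt{U}\lrp{t_0}=\Kblock{U}_{t_0}$, so your first alternative applies and the term vanishes. Your jump terms are likewise already contained in the paper's integral, because it writes $\dot{\phi}$ with Dirac deltas.
\item Galerkin orthogonality needs $\KblockDt{V}$ in the discrete \emph{test} space, as you assume. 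The statement's ``$\KblockDt{V}\in\KblockDt{\mathcal{U}}$'' has to be read that way, and the appendix itself minimizes over $\KblockDt{\mathcal{V}}$. For the paper's Petrov--Galerkin pairs, a trial-space $\KblockDt{V}$ generally leaves a nonzero $\rho\lrp{\KblockDt{U},\KblockDt{V}}$, and the inequality can then fail.
\end{enumerate}
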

	
	\begin{proof}
		The proposition is proved by \cite[Propositions 2.2 and 2.3]{becker2001optimal}, noting that both the left hand side $\Kblock{f} \lrp{\cdot, \cdot}$ and the right hand side $\Kblock{g} \lrp{\cdot}$ are linear. 
		In Appendix~\ref{sec:appendix050}, the proof specifically for our case is presented.
	\end{proof}
	
	To make the error estimator in \eqref{eq:2070} sharp, $\KblockDt{V}$ has to be a good approximation to $\Kblock{Z}$. In practice, we use the discrete adjoint result $\KblockDt{Z}$ as $\KblockDt{V}$. Since the exact adjoint $\Kblock{Z}$ is generally unknown, we approximate the adjoint error by the difference between two different interpolations (e.g. linear and quadratic) of $\KblockDt{Z}$.
	
	%-----------------------------
	\subsection{Decoupling}
	%-----------------------------
	\label{sec:decoupling}
	The presented \emph{holistic} formulation~\eqref{eq:2050} of variational problem~\eqref{eq:2040} is useful for the derivation of the DWR method. Of course, in practice, the iteration steps are usually solved sequentially. More precisely, we may use the block--triangular structure % can use that
	\begin{align*}
		\Kblock{B} = \lrb{\begin{array}{ccccc}
				\hat{B}		& 0			& \ldots	& 0			& 0\\
				\check{B}	& \hat{B}	& \ddots	& 0			& 0\\
				\vdots		& \ddots	& \ddots	& \ddots	& \vdots\\
				0			& 0			& \ddots	& \hat{B}	& 0\\
				0			& 0			& \ldots	& \check{B}	& \hat{B}
		\end{array}},
		& &
		{\Kblock{B}}^\top = \lrb{\begin{array}{ccccc}
				\hat{B}^\top	& \check{B}^\top	& \ldots	& 0			& 0\\
				0			& \hat{B}^\top		& \ddots	& 0			& 0\\
				\vdots		& \ddots		& \ddots	& \ddots	& \vdots\\
				0			& 0				& \ddots	& \hat{B}^\top	& \check{B}^\top\\
				0			& 0				& \ldots	& 0			& \hat{B}^\top
		\end{array}}
	\end{align*}
	to decouple the unified variational formulations: We define for $U \in \mathcal{U}$ and $V \in \mathcal{V}$:
	\begin{align*}
		\hat{f} \lrp{U, V}		& := \int\timeinterval \lrp{\dot{U} \lrp{t} + \hat{B} U \lrp{t}} \cdot V \lrp{t} \tn{d} t + U \lrp{t_0} \cdot V \lrp{t_0}
		\\
		&  = \int_\timeinterval \lrp{- \dot{V} \lrp{t} + \hat{B}^\top V \lrp{t}} \cdot U \lrp{t} \tn{d} t + V \lrp{t_n} \cdot U \lrp{t_n},
		\\
		\check{f} \lrp{U, V}	& := \int_\timeinterval \check{B} U \lrp{t} \cdot V \lrp{t} \tn{d} t
		%\\
		%& 
		\quad= \quad \int_\timeinterval \check{B}^\top V \lrp{t} \cdot\! U \lrp{t} \tn{d} t,
		\\
		g \lrp{V}				& := \int_\timeinterval Y \lrp{t} \cdot V \lrp{t} \tn{d} t + U_{t_0} \!\cdot V \lrp{t_0},
		%\\
		\quad\;
		h \lrp{U}				%& 
		:= \int_\timeinterval J_1^\top \lrp{t} \cdot U \lrp{t} + J_2^\top \!\cdot\! U \lrp{t_n} .
	\end{align*}
	Then, the primal problem reads: find $U_k \in \U$ such that for all $V \in \V$ holds
	\begin{equation}
		\hat{f} \lrp{U_k, V} = g \lrp{V} - \check{f} (U_{k-1}, V) 
		\quad (k = \atob{1}{\K}). \label{eq:2210}
	\end{equation}
	
	The dual problem proceeds in reverse order, $k = \atob{\K}{1}$, and backward in time; this coincides with %the results in 
	\cite{estep2012posteriori}. Thus, we seek $Z_k \in \V$ such that for all $W \in \U$ holds
	\begin{equation}
		\hat{f} \lrp{Z_k, W} = 
		\left\{ \begin{array}{cc} 
			h \lrp{W} & \tn{ for } k = \K,
			\\
			- \check{f} \lrp{W, Z_{k + 1}} & \tn{ otherwise}. 
		\end{array} \right.  
		\label{eq:2220}
	\end{equation}

	%-------------------------        
	\section{Discretization}
	\label{sec:difem}		
	%-------------------------        
	We discuss the discretization of the variational problems~\eqref{eq:2050} and \eqref{eq:2110} via finite dimensional trial and test spaces $\U^{\Dt}$ and $\V^{\Dt}$.

	%--------------------------------------------------------	
	\subsection{Multiadaptive discretization schemes}
	\label{sec:multigrid}				
	%--------------------------------------------------------	
	Many naturally occurring problems exhibit multirate behavior, where the components operate on time scales, which differ drastically. This occurs frequently in electrical circuit simulation where %latency concentrates 
	the activity in concentrated on a subset of components~\cite{bartel2002multirate}. Another example is in astrophysics where Keplerian motion results in significant velocity differences between orbiting bodies \cite{engstler1997multirate}. To efficiently address this, different discretizations tailored to the frequencies of the respective components should be used.
	
	The term \emph{multiadaptive} originates from \cite{logg2003multi, logg2004multi}, where it is used to describe methods that allowed each component to have its own time grid, order of method and quadrature. In this work, to concisely illustrate the core of our idea, we restrict ourselves to the first of these three features, while including the other features is straightforward and left for future research. 
	
	For each component $i = \atob{1}{m}$ of the state solving the ODE~\eqref{eq:1010}, we partition the simulation time interval $\timeinterval = [t_0,t_n]$ into $n_{i}$ cells parametrized by the nodes
	\begin{equation*}
		t_0 = t_{i, 0} < t_{i, 1} < \ldots < t_{i, n_{i} - 1} < t_{i, n_{i}} = t_n .
	\end{equation*}
	Let $N := \sum_{i = 1}^m n_{i}$ be the total number of cells. For $\ii = \atob{1}{n_{i}}$, we define
	\[ 
	\timeintervalpart_{i, \ii} := \left[t_{i, \ii - 1}, t_{i, \ii}\right[ \quad \text{with} \quad 
	\lengthtimeintervalpart{i, \ii} := t_{i, \ii} - t_{i, \ii - 1}.
	\]  
	And for notational ease, we use:
	\[
	\timeintervalpart_{i,0} = \left] -\infty, t_{t_{i,0}} \right[, 
	\qquad 
	\timeintervalpart_{i,n_{i}+1} = \left[ t_{i,n_{i}}, \infty \right[.
	\]
	Furthermore, we define the sets of admissible indices
	\begin{align*}
		\Idx	& := \lrc{\lrp{i, \ii} \in \mathbb{Z}^2 \; | \; 1 \leq i \leq m, \; 1 \leq \ii \leq n_{i}}, 
		%\\
		\quad
		\Idx_0 %&
		:= \Idx \cup \lrc{\lrp{i, 0} \in \mathbb{Z}^2 \; | \; 1 \leq i \leq m}.
	\end{align*}		
	For the sake of ease and clarity, we will use the same discretization for all dynamic iteration steps ($k$) in one sweep. 
	This has several practical benefits: %in particular, 
	as shown later in Proposition~\ref{prop:coefficients}, it allows the reuse of computed discrete systems across dynamic iterations; secondly, it also allows to compute error estimators for variable numbers of iteration steps. Consequently, we can dynamically adjust the number of dynamic iteration steps before grid refinements. In principle, our framework does allow using different grids in each step if this is needed.
	
	Now, we introduce three options for the finite dimensional space of the individual components $i = \atob{1}{m}$ of the state:
	\begin{enumerate}
		\item[(a)] piecewise constant functions:
		\begin{equation*}
			\U_{a, i}^{\Dt} := \Bigl\{ u \; \Bigl| \;\;  
			\forall \ii = \atob{1}{n_{i}+1} : \; u\bigl|_{\timeintervalpart_{i, \ii}} 
			\equiv \tn{const} 
			\}
		\end{equation*}
		with respective indicator functions $\phi_{i, \ii}$ as basis:
		\begin{equation*} \begin{array}{r@{}l@{\quad}ll}
				\phi_{i, 0} \lrp{t} & := 
				\left\{ \begin{array}{cc} 1 & t \in \left] - \infty, t_{i, 1} \right[\\ 
					0 & \tn{ otherwise} 
				\end{array} 
				\right. 
				& \dot{\phi}_{i, 0} \lrp{t} \!= - \delta \lrp{t - t_{i, 1}} & 
				\\[0.25ex]
				\phi_{i, \ii} \lrp{t} & := 
				\left\{ \begin{array}{cc} 1 & t \in \timeintervalpart_{i,\ii}
					\\ 
					0 & \tn{ otherwise} 
				\end{array} \right. 
				& \dot{\phi}_{i, 0} \!\lrp{t} \!=\! \delta \lrp{t \!-\! t_{i, \ii - 1}} - \delta \lrp{t \!-\! t_{i, \ii}}, & \ii \!=\! \atob{1}{n_{i} \!-\! 1}
				\\[0.25ex]
				\phi_{i, n_{i}} \!\lrp{t} 
				&:= \left\{ \begin{array}{cc} 1 & t \in \left[ t_{i, n_{i}}, \infty \right[\\ 0 & \tn{ otherwise} 
				\end{array} \right. 
				& \dot{\phi}_{i, 0} \lrp{t} = \delta \lrp{t - t_{i, n_{i}}}; & 
			\end{array} 
		\end{equation*}
		
		\item[(b)] as a slight modification of (a), we have $\U_{b, i}^{\Dt} $ defined as in (a) just the cells are minutely modified $\tilde{\timeintervalpart}_{i,\ii}:= \left] t_{i,\ii-1}, t_{i,\ii}\right]$; this could be considered a sort of dual counterpart to $\U_{a, i}^{\Dt}$, where we start at $t_n$ and go backward in time from there.
		
		\item[(c)] piecewise linear and globally continuous functions
		\begin{equation*}
			\U_{c, i}^{\Dt}	
			:= \lrc{u \in \mathcal{C}^0 \lrp{\timeinterval, \mathbb{R}} \;\Bigl|\; 
				\forall \ii = \atob{1}{n_{i}}: u\bigl|_{\timeintervalpart_{i, \ii}} \in \mathcal{P}_1 \lrp{\timeintervalpart_{i, \ii}}%
			},
		\end{equation*}
		where $\mathcal{P}_1 \lrp{R}$ are polynomial of maximal degree one on the domain $R \subseteq \mathbb{R}$; suitable basis functions are the hat functions (for $\ii = \atob{0}{n_{i}}$):
		\begin{equation*}
			\phi_{i, \ii} \lrp{t} := \left\{ 
			\begin{array}{cl} 
				\frac{t - t_{i, \ii - 1}}{\Dt_{i, \ii - 1}} & t \in \timeintervalpart_{i, \ii - 1} \;\tn{ and }\; \ii \geq 1,
				\\[0.25ex] 
				\frac{t_{i, \ii + 1} - t}{\Dt_{i, \ii}} & t \in \timeintervalpart_{i, \ii} \;\tn{ and }\; \ii \leq n_{i} - 1,
				\\[0.25ex]
				0 & \tn{otherwise}. \end{array} \right.  
		\end{equation*}
	\end{enumerate}
	For $x \in \lrc{a, b, c}$, the overall spaces (for all components) are then given by
	\begin{equation*}
		\U_x^{\Dt} := \lrc{U = \lrp{u_1, \ldots, u_m} \;|\; \forall i = \atob{1}{m} : u_{i} \in \U_{x, i}^{\Dt}}
	\end{equation*}
	with the basis functions $\Phi_{i, \ii} \lrp{t} := e_i \phi_{i, \ii} \lrp{t}$ for $\lrp{i, \ii} \in \Idx_0$ and canonical basis vector $e_{i} \in\mathbb{R}^m$.
	Different FE schemes now arise by choosing $\U^{\Dt}$ and $\V^{\Dt}$ from $\U_a^{\Dt}$, $\U_b^{\Dt}$ and $\U_c^{\Dt}$. Of particular interest to us are the choices $\U^{\Dt}:= \U_a^{\Dt}$ and $\V^{\Dt}:= \U_b^{\Dt}$ yields a time--stepping scheme equivalent to the explicit Euler method as well as $\U^{\Dt}:= \U_c^{\Dt}$ and $\V^{\Dt}:= \U_b^{\Dt}$ for the Crank--Nicolson scheme. Of course, other options are also possible.

	%----------------------------------------------------    
	\subsection{Solution of the discretized problem}
	%----------------------------------------------------  
	Here, we consider one fixed step $k$ ($\in $\{\atob{1}{\K}\}$)$ of dynamic iteration step.
	We seek an approximation $U_k^{\Dt}$ to the analytic solution $U_k$ of \eqref{eq:2210}:
	\begin{equation}
		U_k \lrp{t} \approx 
		U_k^{\Dt} \lrp{t}
		:= \sum_{i = 1}^m \sum_{\ii = 0}^{n_{i}} u_{k, i, \ii}^{\Dt} \cdot \Phi_{i, \ii} \lrp{t} . \label{eq:3210}
	\end{equation}
	Likewise, the approximation of the solution of the adjoint equation~\eqref{eq:2110} is given by
	\begin{equation}
		Z_k \lrp{t} \approx 
		Z_kt^{\Dt}  \lrp{t} 
		:= \sum_{i = 1}^m \sum_{\ii = 0}^{n_{i}} z_{k, i, \ii}^{\Dt} \cdot \Phi_{i, \ii} \lrp{t} . \label{eq:3220}
	\end{equation}
	The unknown coefficients $u_{k, i, \ii}^{\Dt}$ and $z_{k, i, \ii}^{\Dt}$ for $\lrp{i, \ii} \in \Idx_0$ are obtained by solving by solving the finite--dimensional discrete primal and dual variational problems.
	\vspace{5pt}
	
	%- - - - - - - - - - - - - - - - - - - - - 
	\textbf{Discrete primal formulation.}
	%- - - - - - - - - - - - - - - - - - - - - 
	The discrete primal variational problem associated with the ODE~\eqref{eq:2210} is to find coefficients $u_{k, i, \ii}^{\Dt}$ for $\lrp{i, \ii} \in \Idx_0$ such that for all $\Phi_{\j, \jj} \in \V^{\Dt}$ holds
	\begin{equation} \label{prob:discretePrimal}
		\hat{f} \lrp{U_k^{\Dt}, \Phi_{\j, \jj}} = g \lrp{\Phi_{\j, \jj}} - \check{f} \lrp{U_{k-1}^{\Dt}, V^{\Dt}}. %\label{eq:3230}
	\end{equation}
	Using the ansatz for $U_k^{\Dt}$ \eqref{eq:3210} and linearity, we can write the linear system \eqref{prob:discretePrimal} as:
	\begin{align*}
		& \underbrace{\lrb{\begin{array}{ccc}
					\hat{f} \lrp{\Phi_{1, 0},  \Phi_{1, 0}} & \ldots & \hat{f} \lrp{\Phi_{m, n_m},  \Phi_{1, 0}}\\
					\vdots & & \vdots\\
					\hat{f} \lrp{\Phi_{1, 0},  \Phi_{m, n_m}} & \ldots & \hat{f} \lrp{\Phi_{m, n_m},  \Phi_{m, n_m}}
		\end{array}}}_{\hat{F}}
		\lrp{\begin{array}{c} u_{k, 1, 0}^{\Dt}\\ \vdots\\ u_{k, m, n_m}^{\Dt} \end{array}} \nonumber\\
		= \underbrace{\lrp{\begin{array}{c} g \lrp{\Phi_{1, 0}}\\ \vdots\\ g \lrp{\Phi_{m, n_m}} \end{array}}}_{G} - & \underbrace{\lrb{\begin{array}{ccc}
					\check{f} \lrp{\Phi_{1, 0},  \Phi_{1, 0}} & \ldots & \check{f} \lrp{\Phi_{m, n_m},  \Phi_{1, 0}}\\
					\vdots & & \vdots\\
					\check{f} \lrp{\Phi_{1, 0},  \Phi_{m, n_m}} & \ldots & \check{f} \lrp{\Phi_{m, n_m},  \Phi_{m, n_m}}
		\end{array}}}_{\check{F}} \lrp{\begin{array}{c} u_{k - 1, 1, 0}^{\Dt}\\ \vdots\\ u_{k - 1, m, n_m}^{\Dt} \end{array}} . %\label{eq:2165}
	\end{align*}
	
	%- - - - - - - - - - - - - - - - - - - - -        
	\textbf{Discrete dual formulation.}
	\label{prob:discreteDual}
	%- - - - - - - - - - - - - - - - - - - - -        
	The discrete adjoint variational problem corresponding to the adjoint problem~\eqref{eq:2110} is to find coefficients $z_{k, i, \ii}^{\Dt}$ for $\lrp{i, \ii} \in \Idx_0$ such that for all $\Phi_{i, \ii} \in \U^{\Dt}$ holds
	\begin{equation}
		\hat{f} \lrp{\Phi_{i, \ii}, Z_k^{\Dt}} = \left\{ 
		\begin{array}{cc} 
			h \lrp{\Phi_{i, \ii}} & \tn{ if } k = \K,
			\\ 
			- \check{f} \lrp{\Phi_{i, \ii}, Z_{k + 1}^{\Dt}} & \tn{ otherwise.}
		\end{array} \right. \label{eq:3250}
	\end{equation}
	
	By definition of $Z_k^{\Dt}$ in \eqref{eq:3220} and linearity, we obtain the system of equations for the discrete adjoint problem. for $k = \K$, we get:
	\begin{equation*}
		\underbrace{\lrb{\begin{array}{ccc}
					\hat{f} \lrp{\Phi_{1, 0},  \Phi_{1, 0}} & \ldots & \hat{f} \lrp{\Phi_{1, 0},  \Phi_{m, n_m}}\\
					\vdots & & \vdots\\
					\hat{f} \lrp{\Phi_{n, n_m},  \Phi_{1, 0}} & \ldots & \hat{f} \lrp{\Phi_{m, n_m},  \Phi_{m, n_m}}
		\end{array}}}_{\hat{F}^{\top}}
		\lrp{\begin{array}{c} z_{k, 1, 0}^{\Dt}\\ \vdots\\ z_{k, m, n_m}^{\Dt} \end{array}}
		= 
		\underbrace{\lrp{\begin{array}{c} h \lrp{\Phi_{1, 0}}\\ \vdots\\ h \lrp{\Phi_{m, n_m}} \end{array}}}_{H}
	\end{equation*}
	and for $k = \atob{\K - 1}{1}$:
	\begin{align*}
		& \underbrace{\lrb{\begin{array}{ccc}
					\hat{f} \lrp{\Phi_{1, 0},  \Phi_{1, 0}} & \ldots & \hat{f} \lrp{\Phi_{1, 0},  \Phi_{m, n_m}}\\
					\vdots & & \vdots\\
					\hat{f} \lrp{\Phi_{n, n_m},  \Phi_{1, 0}} & \ldots & \hat{f} \lrp{\Phi_{m, n_m},  \Phi_{m, n_m}}
		\end{array}}}_{\hat{F}^\top}
		\lrp{\begin{array}{c} z_{k, 1, 0}^{\Dt}\\ \vdots\\ z_{k, m, n_m}^{\Dt} \end{array}} \nonumber\\
		= -
		& \underbrace{\lrb{\begin{array}{ccc}
					\check{f} \lrp{\Phi_{1, 0},  \Phi_{1, 0}} & \ldots & \check{f} \lrp{\Phi_{1, 0},  \Phi_{m, n_m}}\\
					\vdots & & \vdots\\
					\check{f} \lrp{\Phi_{n, n_m},  \Phi_{1, 0}} & \ldots & \check{f} \lrp{\Phi_{m, n_m},  \Phi_{m, n_m}}
		\end{array}}}_{\check{F}^{\top}}
		\lrp{\begin{array}{c} z_{k + 1, 1, 0}^{\Dt}\\ \vdots\\ z_{k + 1, m, n_m}^{\Dt} \end{array}} .
	\end{align*}
	
	\begin{proposition}
		\label{prop:coefficients}
		Given $\lrp{i, \ii}, \lrp{\j, \jj} \in \Idx_0$,
		the entries of $\hat{F}$, $\check{F}$, $G$ and $H$ read: 
		\begin{align*}
			\hat{f} \lrp{\Phi_{\j, \jj},  \Phi_{i, \ii}}		
			& = \int_\timeinterval \lrp{\delta_{\j, i} \dot{\phi}_{\j, \jj} \lrp{t} + \hat{b}_{i \j} \phi_{\j, \jj} \lrp{t}} \phi_{i, \ii} \lrp{t} \tn{d} t + \delta_{\j, i} \delta_{\jj, 0} \delta_{\ii, 0},
			\\
			\check{f} \lrp{\Phi_{\j, \jj},  \Phi_{i, \ii}}	
			& = \int_\timeinterval \lrp{\delta_{\j, i} \dot{\phi}_{\j, \jj} \lrp{t} + \check{b}_{i \j} \phi_{\j, \jj} \lrp{t}} \phi_{i, \ii} \lrp{t} \tn{d} t + \delta_{\j, i} \delta_{\jj, 0} \delta_{\ii, 0},
			\\
			g \lrp{\Phi_{i, \ii}}							
			& = \int_\timeinterval y_{i} \lrp{t} \phi_{i, \ii} \lrp{t} \tn{d} t + \delta_{\ii, 0} u_{t_0, i},
			%\\
			\qquad 
			h \lrp{\Phi_{\j, \jj}}							
			%& 
			= \sum_{r = 1}^R j_{t, \j} \phi_{\j, \jj} \lrp{t_r} .
		\end{align*}
	\end{proposition}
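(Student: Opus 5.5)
The plan is to substitute the basis functions $\Phi_{\j, \jj} = e_{\j} \phi_{\j, \jj}$ and $\Phi_{i, \ii} = e_i \phi_{i, \ii}$ directly into the definitions of $\hat{f}$, $\check{f}$, $g$ and $h$ from Section~\ref{sec:decoupling}, and then evaluate the vector products componentwise. No earlier result is needed beyond these definitions and the explicit basis from Section~\ref{sec:multigrid}; the whole argument is a computation organized in three steps.

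First, for $\hat{f}$, I use the first (non-integrated-by-parts) form
\[
\hat{f}(U,V) = \int_\timeinterval \bigl(\dot{U}(t) + \hat{B} U(t)\bigr)\cdot V(t)\,\tn{d}t + U(t_0)\cdot V(t_0).
\]
With $U = e_{\j}\phi_{\j, \jj}$ we have $\dot{U} = e_{\j}\dot{\phi}_{\j, \jj}$ and $\hat{B}U = \hat{B}e_{\j}\phi_{\j, \jj}$. Dotting with $e_i\phi_{i, \ii}$ uses the identities $e_{\j}\cdot e_i = \delta_{\j, i}$ and $e_i\cdot \hat{B} e_{\j} = \hat{b}_{i\j}$, which gives the integrand. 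For the boundary term, $U(t_0)\cdot V(t_0) = \delta_{\j, i}\,\phi_{\j, \jj}(t_0)\,\phi_{i, \ii}(t_0)$. I then check from the three basis families (a), (b), (c) that $\phi_{i, \ii}(t_0) = \delta_{\ii, 0}$: the indicator or hat function with index $0$ is the only one that is nonzero at $t_0 = t_{i,0}$. Here the half-open cells and the convention $\timeintervalpart_{i,0}$ matter, and I will comment on option (b), where the point value at $t_0$ should be read as a limit.

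Second, for $\check{f}$, the definition contains only the coupling integral $\int \check{B}U\cdot V$. Carried out literally, the same computation gives $\check{b}_{i\j}$ times the integral. The stated formula additionally carries the derivative and initial-value terms, so I will have to reconcile it with the convention used in the stacked system~\eqref{eq:2040}. Most likely the intended reading is that only the $\check{b}_{i\j}$ term survives, since $\check{f}$ has no time derivative. I will present the entry as the $\check{B}$-analogue of the $\hat{f}$ computation and flag this point explicitly.

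Third, $g(\Phi_{i, \ii}) = \int Y\cdot e_i\phi_{i, \ii} + U_{t_0}\cdot e_i\phi_{i, \ii}(t_0) = \int y_i\phi_{i, \ii} + \delta_{\ii, 0}u_{t_0,i}$, again using $\phi_{i, \ii}(t_0) = \delta_{\ii, 0}$. For $h$, I use the QoI~\eqref{eq:1020}: $J(\Phi_{\j, \jj}) = \sum_r J_r\cdot e_{\j}\phi_{\j, \jj}(\tau_r) = \sum_r j_{r,\j}\,\phi_{\j, \jj}(\tau_r)$, matching the stated expression up to notation.

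The only delicate points are the following. First, the derivative $\dot{\phi}$ for the discontinuous spaces (a) and (b) must be interpreted distributionally, as Dirac deltas, so the integral $\int \dot{\phi}_{\j, \jj}\phi_{i, \ii}$ is meaningful only with a fixed one-sided evaluation convention; I expect this to be the main obstacle. Second, the status of the extra terms in the $\check{f}$ entry must be settled.
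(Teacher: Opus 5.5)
Your substitution argument is exactly the paper's proof. The paper's proof is the single remark that the formulas follow by inserting $\Phi_{i,\ii}=e_i\phi_{i,\ii}$ into the definitions of Section~\ref{sec:decoupling}, and your $\hat f$ and $g$ computations carry that remark out.

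The one point to settle firmly is $\check f$. The terms $\delta_{\j,i}\dot\phi_{\j,\jj}$ and $\delta_{\j,i}\delta_{\jj,0}\delta_{\ii,0}$ in the stated $\check f$ entry cannot be obtained from $\check f(U,V)=\int_{\timeinterval}\check B U\cdot V\,\tn{d}t$, and they would in fact be wrong. Since $\hat B+\check B=B$, the matrix $\hat F+\check F$ must be the matrix of the full form $\int_{\timeinterval}(\dot U+BU)\cdot V\,\tn{d}t+U(t_0)\cdot V(t_0)$. Only then does a fixed point of $\hat F u_k=G-\check F u_{k-1}$ solve the unsplit discrete problem. With the extra terms, the time derivative and the initial condition would be counted twice. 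The stacked system~\eqref{eq:2040}, where $\check B$ appears only in the off-diagonal blocks, confirms this. So there is no convention to reconcile. State and prove the corrected entry $\check f(\Phi_{\j,\jj},\Phi_{i,\ii})=\check b_{i\j}\int_{\timeinterval}\phi_{\j,\jj}\phi_{i,\ii}\,\tn{d}t$. Say explicitly that the printed $\check f$ line was evidently copied from the $\hat f$ line, and that the paper's one-line proof passes over this.

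There are three smaller points.

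\textbf{The choice of $h$.} Taking $h$ to be the QoI~\eqref{eq:1020} is right, because the right-hand side of the dual problem for $k=\K$ is $\Kblock{J}\lrp{\Kblock{W}}=J\lrp{W_{\K}}$. The $h$ printed in Section~\ref{sec:decoupling}, built from $J_1$ and $J_2$, would not give the stated point evaluations, so say which definition you use.

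\textbf{Distributional derivatives.} These are not an obstacle for this statement. The entries are left as the unevaluated integrals $\int_{\timeinterval}\dot\phi_{\j,\jj}\phi_{i,\ii}\,\tn{d}t$, so bilinearity of whatever pairing $\int\dot U\cdot V$ denotes in $\hat f$ is all you need. A one-sided convention only matters once these integrals are actually evaluated.

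\textbf{The identity $\phi_{i,\ii}(t_0)=\delta_{\ii,0}$.} This does need a word. As printed, the basis in (a) is off by one: $\phi_{i,0}$ and $\phi_{i,1}$ both equal $1$ on $[t_{i,0},t_{i,1}[$. The identity therefore holds only for the intended basis, in which $\phi_{i,\ii}$ is the indicator of $[t_{i,\ii},t_{i,\ii+1}[$ for $1\le\ii\le n_i-1$.
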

	\begin{proof}
		This follows from inserting the appropriate $\Phi_{i, \ii} = e_i \phi_{i, \ii}$ into the functions $\hat{f}$, $\check{f}$, $g$ and $h$ as defined in Section \ref{sec:decoupling}.
	\end{proof}
	
	\begin{remark} %\
		Note that $\hat{F}$, $\check{F}$, $G$ and $H$ do not depend on the dynamic iteration index $k$, hence they need to be computed only once in terms of the basis functions. The right hand sides $g$ an $h$ include the more general functions $Y$ and $J_1$ respectively and so must be approximated using quadrature. In this case, it is advantageous to use an approximation of the integral that is consistent with the left hand side.
	\end{remark}
	
	%--------------------------------------------            
	\section{Goal oriented mesh refinement}
	\label{sec:errorest}	
	%--------------------------------------------            
	We describe the computation of the error estimators and the resulting mesh refinement for the DI method. First, we provide an overview of the proposed error estimation and refinement method. 
	To this end, we refer to the meshes described in Section~\ref{sec:multigrid} as $\T^l$ with the index $l$ denoting the refinement level. Thus, from the initially given mesh $\T^0$, we obtain iteratively finer meshes using error estimators from the previous level.
	
	We estimate the error as follows:
	\begin{equation*}
		\lrv{J \lrp{U} - J \lrp{U_{\K_l}^{\Dt}}} \leq \lrv{J \lrp{U} - J \lrp{U_{\K_l}}} + \lrv{J_{\K_l} \lrp{U} - J \lrp{U_{\K_l}^{\Dt}}} \leq \nu_l + \mu_{\K_l}^{\Dt_l} .
	\end{equation*}
	Here, $\nu_l$ and $\mu^{\Dt_l}$ are estimates of the splitting and discretization errors, respectively. We already have a bound for $\nu_l$ in %the form of 
	Proposition~\ref{prop:errDI}. The basis for evaluating $\mu^{\Dt_l}$ is provided in Proposition~\ref{prop:DWR}; and in the following, we discuss how to obtain the localized error estimators %from this 
	efficiently.
	
	%---------------------------------------------------------------------
	\subsection{Local goal oriented discretization error estimators}
	\label{sec:localDiscreteGO}
	%---------------------------------------------------------------------
	
	The upper bound of the total error $\Kblock{J} \lrp{\KblockDt{E}} = J \lrp{E_k^{\Dt}}$ is given by \eqref{eq:2090} and can be split as follows:
	\begin{align}
		& \int_\timeinterval \lrp{\Kblock{Y} \lrp{t} - \KblockDt{\dot{U}} \lrp{t} - \Kblock{B} \KblockDt{U} \lrp{t}} \cdot \lrp{\Kblock{Z} \lrp{t} - \KblockDt{Z} \lrp{t}} dt 
		\label{eq:3510}
		\\
		=	& \sum_{k = 1}^{\K} \sum_{i = 1}^m \sum_{\ii = 1}^{n_{i}} \int_{\timeintervalpart_{i, \ii}} \underbrace{\lrp{y_{i} \lrp{t} - \dot{u}_{k, i}^{\Dt} \lrp{t} - \hat{B}_{i \cdot} \cdot U_k^{\Dt} \lrp{t} - \check{B}_{i \cdot} \cdot U_{k - 1}^{\Dt} \lrp{t}}}_{\rho_i \lrp{U_k^{\Dt}, U_{k - 1}^{\Dt}, t}} \underbrace{\lrp{z_{i} \lrp{t} - z_{k, i}^{\Dt} \lrp{t}}}_{e_{k, i}^* \lrp{t}} dt . \nonumber
	\end{align}
	This partition would allow to compute error estimators for every time step $\ii$ in every component $i$ in every DI step $k$. However, since we want to use the same grid for all $k$, we add up the errors over the iteration index $k$ for each $\lrp{i, \ii} \in \Idx$. If a cell is then refined, that refinement effects all iteration steps $k$.			
	\begin{equation}
		\mu_{i, \ii}^{\Dt} 
		:= \sum_{k = 1}^{\K} \lrv{\int_{\timeintervalpart_{i, \ii}} \rho_i \lrp{U_k^{\Dt}, U_{k - 1}^{\Dt}, t} e_{k, i}^* \lrp{t} dt} .\label{eq:3520}
	\end{equation}
	The use of absolute values is to prevent cancellation effects between the terms for different $k$.
	Then, a numerical estimator of the total $J$ error is given by
	\begin{equation*}
		\mu^{\Dt} := \sum_{i = 1}^m \sum_{\ii = 1}^{n_{i}} \mu_{i, \ii}^{\Dt} .
	\end{equation*}
	
	\begin{remark}
		%				\
		\begin{enumerate}[label = (\roman*)]	
			\item Note that due to the approximate nature of the computations there is no guarantee that it is actually an upper bound for the error in $J$. However, it can still be useful to guide the refinement.
			\item In \eqref{eq:3520}, we use the unknown exact solution $z_{i} \lrp{t}$ of the adjoint problem which is generally unknown. Therefore, % As such, 
			we use different approximations of $z_{i}$ in place of the term $\lrp{z_{i} \lrp{t} - z_{k, i}^{\Dt} \lrp{t}}$ (see below and cf. \cite[Section 5.1]{becker2001optimal}). %\hfill $\Box$
		\end{enumerate}
	\end{remark}
	
	%- - - - - - - - - - - - - - - - - -- - -- - - - - - - - - - - - - - - - 
	\noindent
	\textbf{Efficient computation of the adjoint and error estimators.}
	%- - - - - - - - - - - - - - - - - -- - -- - - - - - - - - - - - - - - - 
	In theory, adding another DI step changes the stacked problem \eqref{eq:2050} and thus would require evaluating the new adjoint problems for all $\tilde{k} = \atob{k}{1}$. However, this can be circumvented by exploiting the structure of the discrete adjoint problem \ref{prob:discreteDual}. Suppose we have made $\K$ steps in the process and have solved the discrete adjoint problems
	\begin{align*}
		\hat{F}^T Z_{\K}^{\Dt}  & = H,\\
		\hat{F}^T Z_{k}^{\Dt}   & = - \check{F}^\top Z_{k + 1}^{\Dt}, \qquad k = \atob{\K - 1}{1} .
	\end{align*}
	For the next iteration step, we would have to solve
	\begin{align*}
		\hat{F}^T \tilde{Z}_{\K + 1}^{\Dt}  & = H,\\
		\hat{F}^T \tilde{Z}_{k}^{\Dt}       & = - \check{F}^\top \tilde{Z}_{k + 1}^{\Dt}, \qquad k = \atob{\K}{1} .
	\end{align*}
	By induction, it is easy to see that for $k = \atob{\K + 1}{2}$, we have $\tilde{Z}_k^{\Dt} = Z_{k - 1}^{\Dt}$. Of the new adjoints, only $\tilde{Z}_1^{\Dt}$ actually has to be computed.
	
	Furthermore, note from \eqref{eq:3510} that each $\mu_{i, \ii, k}^{\Dt}$ is the integral of a product between the primal residual and the estimate of the dual error. The former only depends on $U^{\Dt}$, the latter only on $Z^{\Dt}$. When approximating the integrals using quadrature, we obtain a formula like
	\begin{equation*}
		\int_{T_{i, \ii}} \rho_{i} \lrp{U_k^{\Dt} \lrp{t}} e_{i, k}^{*^{\Dt}} \lrp{t} \tn{d} t \approx \sum_{\ii = 0}^{n_{i}} c_{\ii} \rho_{i} \lrp{U_k^{\Dt} \lrp{t_{i, \ii}}} e_{i, k}^{*^{\Dt}} \lrp{t_{i, \ii}}
	\end{equation*}
	with coefficients $c_{\ii}$. The terms $\rho_{i} \lrp{U_k^{\Dt} \lrp{t_{i, \ii}}}$ and $e_{i, k}^{*^{\Dt}} \lrp{t_{i, \ii}}$ only need to be evaluated once and from then, the quadrature coefficients can be computed by multiplying the appropriate terms.

	%--------------------------------------------   
	\subsection{Algorithm}
	\label{sec:algorithm}
	%--------------------------------------------   
	\begin{algorithm}[hbtp!]
		\caption{Goal oriented refinement for dynamical iteration.}\label{alg:GOPFEM}
		\begin{algorithmic}[1]
			\Statex \textbf{Input}: Problem \eqref{eq:1010} with $Y \in \mathcal{C}^0 \lrp{T, \mathbb{R}^m}$, $\T^0$, $\L$, $p$, $S$ and QoI \eqref{eq:1020}
			\Statex \textbf{Output}: Approximation $U_{K_{\L}}^{\Dt_{\L}}$ and error estimators
			\State Compute $L_1$, $L_2$  \qquad \hfill\Comment{see Proposition~\ref{prop:errDI}}
			\For{$l = 0, \ldots, \L$} \quad\qquad  \hfill\Comment{loop over refinement levels}
			\If{$l = 0$}
			\State Set $U_0^{\Dt_0} \equiv U_{t_0}$
			\Else
			\State $U_0^{\Dt_l} \gets U_{\K_{l - 1}}^{\Dt_{l - 1}}$
			\State Select $\left\lceil p \cdot N^{l - 1} \right\rceil$ with largest $\mu_{i, \ii}^{\Dt_{l - 1}}$ 
			\State \mbox{}\qquad Bisect corresponding cells $T_{i, \ii}^{l - 1}$ \hfill
			\Comment{refinement from $\T^{l - 1}$ to $\T^{l}$}
			\EndIf
			\State Compute $\hat{F}^l$, $G^l$, $\check{F}^l$ and $H^l$ \qquad  \hfill\Comment{Discretization, see Proposition~\ref{prop:coefficients}}
			\State Solve $\hat{F}^l U_1^{\Dt_l} = G^l - \check{F}^l U_0^{\Dt_l}$ for $U_1^{\Dt_l}$ \hfill \Comment{primal problem}
			\State Solve $\hat{F}^{l^T} Z_1^{\Dt_l} = H^l$ for $Z_1^{\Dt_l}$
			\hfill \Comment{dual problem}
			\State Set $\sup_{E_k} := \sup_{t \in T} \norm{U_1^{\Dt_l} \lrp{t} - U_0^{\Dt_l} \lrp{t}}$
			\State \Comment{Subsequent primal and dual evaluation}
			\For{$k = 2, \ldots, \K_{\max}$}
			\State Solve $\hat{F}^l U_k^{\Dt_l} = G^l - \check{F}^l U_{k - 1}^{\Dt_l}$ for $U_k^{\Dt_l}$ \hfill \Comment{primal problem}
			\For{$\tilde{k} = k, \ldots, 2$}
			\State Set $Z_{\tilde{k}}^{\Dt_l} := Z_{\tilde{k} - 1}^{\Dt_l}$
			\hfill \Comment{shift adjoint solution}
			\EndFor
			\State Solve $\hat{F}^{l^T} Z_1^{\Dt_l} = - \check{F}^{l^T} Z_2^{\Dt_l}$ for $Z_1^{\Dt_l}$ \hfill \Comment{update of dual solution}
			\State \Comment{Error Estimation}
			\For{$i = 1, \ldots, m$, $\ii = 1, \ldots, n_{i}^l$}
			\State Set $\mu_{i, \ii}^{\Dt_l} := 0$	
			\For{$k = 1, \ldots, K_l$}
			\State Compute $\mu_{k, i, \ii}^{\Dt_l}$ according to \eqref{eq:3520}
			\State Increase $\mu_{i, \ii}^{\Dt_l}$ by $\mu_{k, i, \ii}^{\Dt_l}$
			\EndFor
			\EndFor
			\State Compute total error estimator $\mu^{\Dt_l} := \sum_{i = 1}^m \sum_{\ii = 1}^{n_{i}^l} \mu_{i, \ii}^{\Dt_l}$
			\State Compute $\nu_l$ \hfill \Comment{error bound from DI}
			\If{$\mu^{\Dt_l} > \nu_l$}
			\State \textbf{break} \hfill \Comment{stop DI}
			\EndIf
			\EndFor
			\EndFor
		\end{algorithmic}
	\end{algorithm}

	Now, we have all components needed to describe the full goal oriented refinement strategy, which is presented in Algorithm~\ref{alg:GOPFEM}. Fundamentally, it consists of two nested loops:
	\begin{itemize}
		\item an outer loop over the refinement levels, $l = \atob{0}{\L}$, with predefined  maximal number of refinements $L$;
		\item an inner loop over the DI steps on each refinement level for $k = \atob{1}{\K_l}$ with $\K_l$ bounded by a given $\K_{\max}$ and a stopping criterion.
	\end{itemize}
	For the stopping criterion of the inner loop, we follow \cite{rannacher2013adaptive} and seek to balance
	\begin{equation*}
		\mu^{\Dt_l} \approx \nu_l .
	\end{equation*}
	Since additional DI steps will mostly decrease $\nu_{\K_l}$, we are specifically looking for $\geq$ in this relation.

	After an inner loop, the grid is refined by bisecting the cells with the largest local discretization error given estimators $\mu_{i, \ii}^{\Dt}$. The number of cells to be refined
	is a fixed predefined fraction $p \in \left( 0, 1 \right]$ of the total number of cells. The case $p = 1$ corresponds to uniform refinement.
	
	If $l < \L$ (outer loop), the computed refinement is used in the next series of dynamic iterations. The final approximation $U_{\K_l}^{\Dt_l}$ on the current mesh will serve as the initial waveform $U_{0}^{\Dt_{l + 1}}$ which allows us to keep the progress of the DI. 
	
	\begin{remark}
		(i) The error estimators $\mu_{i, \ii}^{\Dt_l}$ which were used to decide which cells need to be refined were computed with old iterates, while mesh is used for the new iterates.
		This resembles the classical heuristics for time stepping.
		
		(ii) The discretization error estimator is computed for the DI process \emph{up to the last refinement}. One practical consequence for splitting is that enough iteration steps need to be performed to allow information to propagate through the system. For instance if only a single step with Jacobi splitting was performed, no coupling between components has taken place yet. Consequently a component which does not directly impact the quantity of interest will not get any refinement.
	\end{remark}
	
	%----------------------------------------
	\section{Numerical Experiments}
	\label{sec:experiments}
	%----------------------------------------
	We present the numerical results for the goal oriented refinement for DI, Algorithm~\ref{alg:GOPFEM}. We compare the explicit Euler scheme and the Crank--Nicolson scheme, each with goal oriented and uniform refinement. 
	Some parameters will remain common between all following test cases. We use $\L = 10$ goal oriented refinement steps with $p = 0.4$ and $\L = 5$ uniform ($p = 1$) steps yielding roughly similar numbers of total time steps, starting from an initial grid consisting of $32$ equidistant time points for each component.
	
	%---------------------------------
	\begin{experiment} % TEST 1
		\label{exp:6110}
		%---------------------------------
		On $\timeinterval := \lrb{0, 3}$, we consider the weakly coupled initial value problem
		\begin{equation*}
			\begin{pmatrix} \dot{u}_1 \lrp{t}\\ \dot{u}_2 \lrp{t} \end{pmatrix} + \lrb{\begin{array}{cc}  10 & -1\\  1 &  10 \end{array}} \begin{pmatrix} u_1 \lrp{t}\\ u_2 \lrp{t} \end{pmatrix} = \begin{pmatrix} 10 \sin \lrp{t}\\ \sin \lrp{10 t} \end{pmatrix},
			\qquad 
			\begin{pmatrix}
				u_1 \lrp{0} \\
				u_2 \lrp{0}  
			\end{pmatrix}
			=\left(\begin{array}{r}
				-0.1 \\ 
				0.1
			\end{array}\right)
		\end{equation*}
		and the quantity of interest (QoI) reads
		\begin{equation*}
			J \lrp{U} := u_1 \lrp{2} + u_1 \lrp{3} + 2 u_2 \lrp{3} .
		\end{equation*}
		Figure~\ref{fig:6110} summarizes the results obtained with a Jacobi--splitting. 
		We can see that the goal oriented refinement generally achieves better result than the uniform. 
		As expected Crank--Nicolson scheme is more accurate than the Euler scheme due to its higher order.
		Moreover, the Crank--Nicolson method gives a more consistent error reduction as the grid is refined. 
		On the other hand, the error estimators fall below the actual errors for Crank--Nicolson; this is not the case for the explicit Euler method.
	\end{experiment}

	\begin{figure}[htb]
		\centering
		\includegraphics[width = \textwidth]{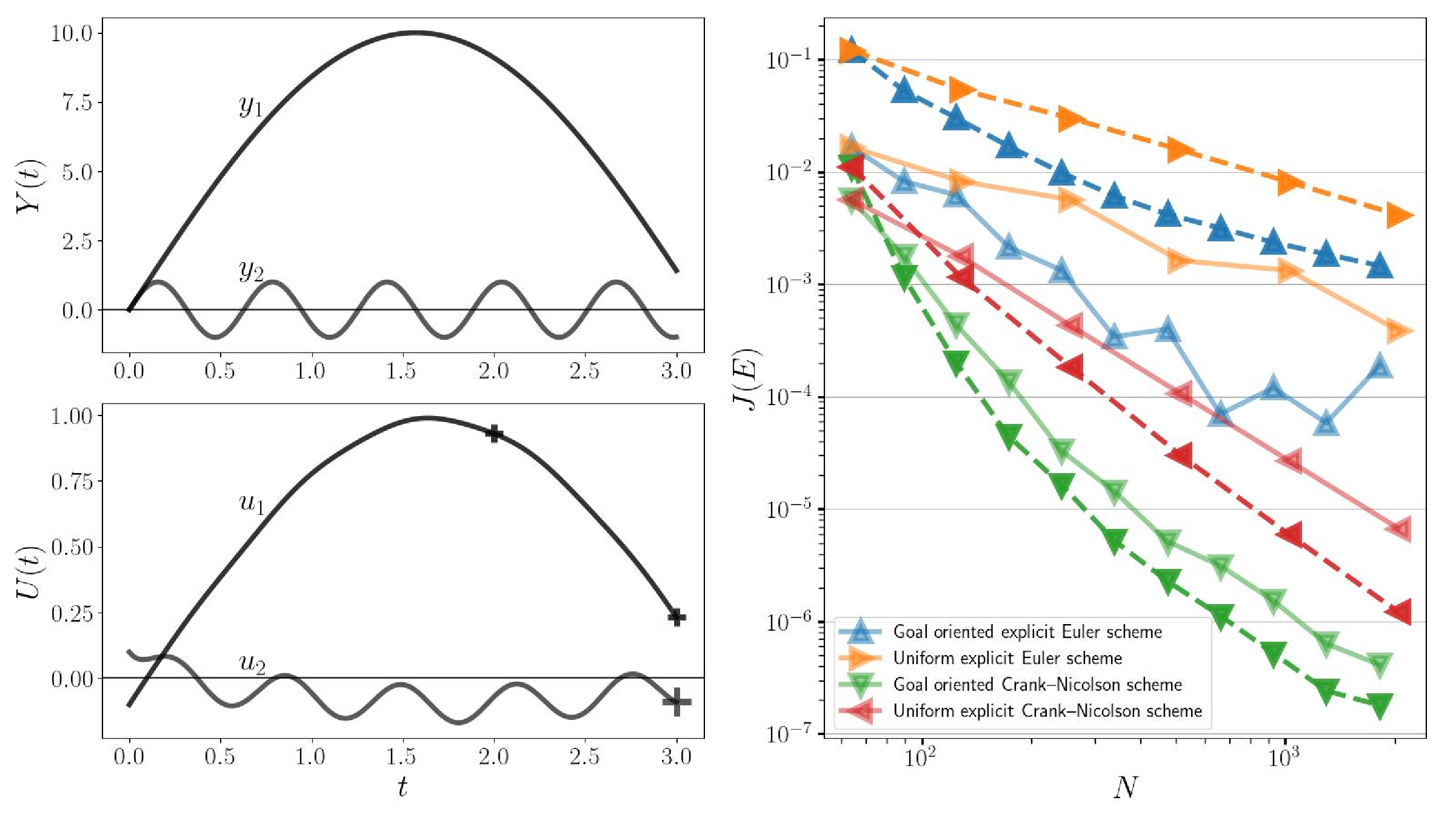}
		\caption{Test Case \ref{exp:6110}. Top left: right-hand side. Bottom left: analytical solution. The ''+'' markers denote the time points of interest which we try to approximate. The larger marker for $u_2$ at $t = 3$ represents the double weight there. Right: Error plots. The $x$--axis displays the total number of time steps made over all components; the $y$--axis shows the error (solid) in $J$ or the estimator (dashed). 
			The goal oriented explicit Euler scheme (blue, up marker) performs better than its uniform counterpart (orange, right marker) in both cases, the error is bounded from above by the estimator. Lower errors can be achieved by using the higher order Crank--Nicolson scheme; with uniform refinement (red, left marker), this yields a very consistent error reduction as $N$ grows while goal oriented refinement (green, down marker) leads to the best results of the examined methods. However, for the Crank--Nicolson schemes, the error is underestimated.
		}
		\label{fig:6110}
	\end{figure}
	
	%-------------------------
	\begin{experiment} % Test 2
		\label{exp:6210}
		%--------------------------
		For $\timeinterval := \lrb{0, 2.5}$, we consider the initial value problem
		\begin{equation*}
			\begin{pmatrix} \dot{u}_1 \lrp{t}\\ \dot{u}_2 \lrp{t}\\ \dot{u}_3 \lrp{t}\\ \dot{u}_4 \lrp{t} \end{pmatrix} + \lrb{\begin{array}{cccc}  5 &  0 &  0 &  0\\  2 &  5 &  1 &  0\\  2 &  0 &  5 &  1\\  0 &  0 & -1 &  5 \end{array}} \begin{pmatrix} u_1 \lrp{t}\\ u_2 \lrp{t}\\ u_3 \lrp{t}\\ u_4 \lrp{t} \end{pmatrix} = \begin{pmatrix} 10 \sin \lrp{t}\\ - 10 \sin \lrp{t}\\ \sin \lrp{10 t}\\ - \sin \lrp{t} \end{pmatrix}
			,\qquad
			\begin{pmatrix}
				u_1 \lrp{0}\\
				u_2 \lrp{0} \\
				u_3 \lrp{0}\\ 
				u_4 \lrp{0}
			\end{pmatrix}
			= \left(\begin{array}{r}
				-0.4\\
				-0.2\\
				0.2\\
				0.4 
			\end{array}\right)
		\end{equation*}
		with QoI, which considers two time points far away from one another in two components:
		\begin{equation*}
			J \lrp{U} := u_2 \lrp{0.5} + u_3 \lrp{2.5} .
		\end{equation*}
		The analytic solution is shown in Figure~\ref{fig:6210} (lower left).
		This problem is split between into slow components $u_1$ and $u_2$ and the fast components $u_3$ and $u_4$. Notice the QoI picks a slow an a fast component. 
	\end{experiment}
	The dynamic iteration and refinement results in Figure~\ref{fig:6210} indicate that the goal oriented refinement performs very well. Again Crank--Nicolson in goal-oriented form outperforms the other schemes. However, the Crank--Nicolson scheme still gives a slightly unreliable bound of the total error. 
	
	\begin{figure}[htb]
		\centering
		\includegraphics[width = \textwidth]{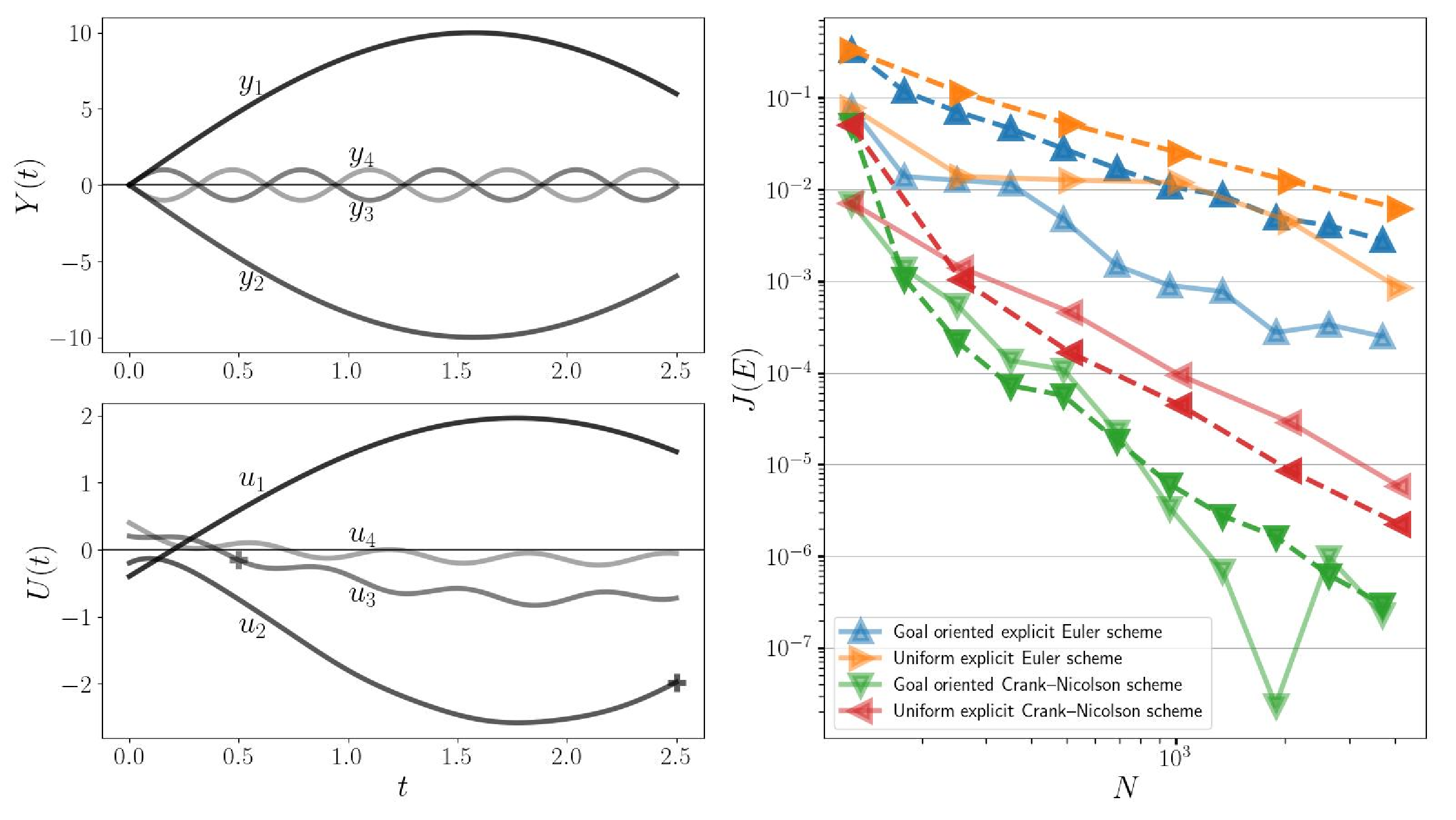}
		\caption{Test Case \ref{exp:6210}. Top left: right--hand side with slow components $y_1$, $y_2$ and faster components $y_3$, $y_4$. Bottom left: analytical solution with slow components $u_1$, $u_2$ and faster components $u_3$, $u_4$. The ''+'' also denote the points of interest; We measure $u_2$ at $t = 0.5$ and at $t = 2.5$, we take $u_3$.
			Right: Convergence results. For the explicit Euler scheme, we see that the goal oriented refinement (blue, down marker) outperforms uniform (orange, right marker) which does not work well in this case. The uniform Crank--Nicolson scheme (red, left marker) reliably reduces the error but also underestimates it (dashed line below solid line). Finally, the goal oriented Crank--Nicolson scheme (green, down marker) works very well here although the error does not monotonically decrease as $l$ is incremented. While the estimator does at times fall below the actual error, the effect is less prominent in this test case due to the high accuracy of goal oriented Crank--Nicolson.}
		
		\label{fig:6210}
	\end{figure}
	
	To investigate the behavior in the refinement, the results on the final mesh are shown in Figure \ref{fig:6230}.
	We observe for the goal-oriented dynamic iteration with Euler and Crank--Nicolson:
	\begin{itemize}
		\item $u_1$ is refined mostly just before $t = 0.5$ and $t = 0.25$, where $u_2$ and $u_3$ enter the QoI. Note, $u_1$ is needed to compute these quantities.
		\item $u_2$ is refined primarily toward the end where it is evaluated for the goal functional. Since it does not influence any other component, there is no need to refine it elsewhere.
		\item $u_3$ sees refinement mostly before $t = 0.5$, where we need it for $J$ and near the end $u_2$ depends on it.
		\item $u_4$ is mostly refined before $t = 0.5$ since it is coupled with $u_3$. Some refinement also takes place after that since $u_4$ and $u_2$ are indirectly coupled via $u_3$ though this effect is weaker resulting in less refinement than e.g. $u_1$ or $u_3$.
	\end{itemize}
	A notable difference is that the Crank--Nicolson scheme results in a more distributed refinement while the explicit Euler scheme results in a particularly strong concentration on several subintervals of $\timeinterval$. %\hfill $\Box$
	
	\begin{figure}[htb]
		\centering
		\includegraphics[width = \textwidth]{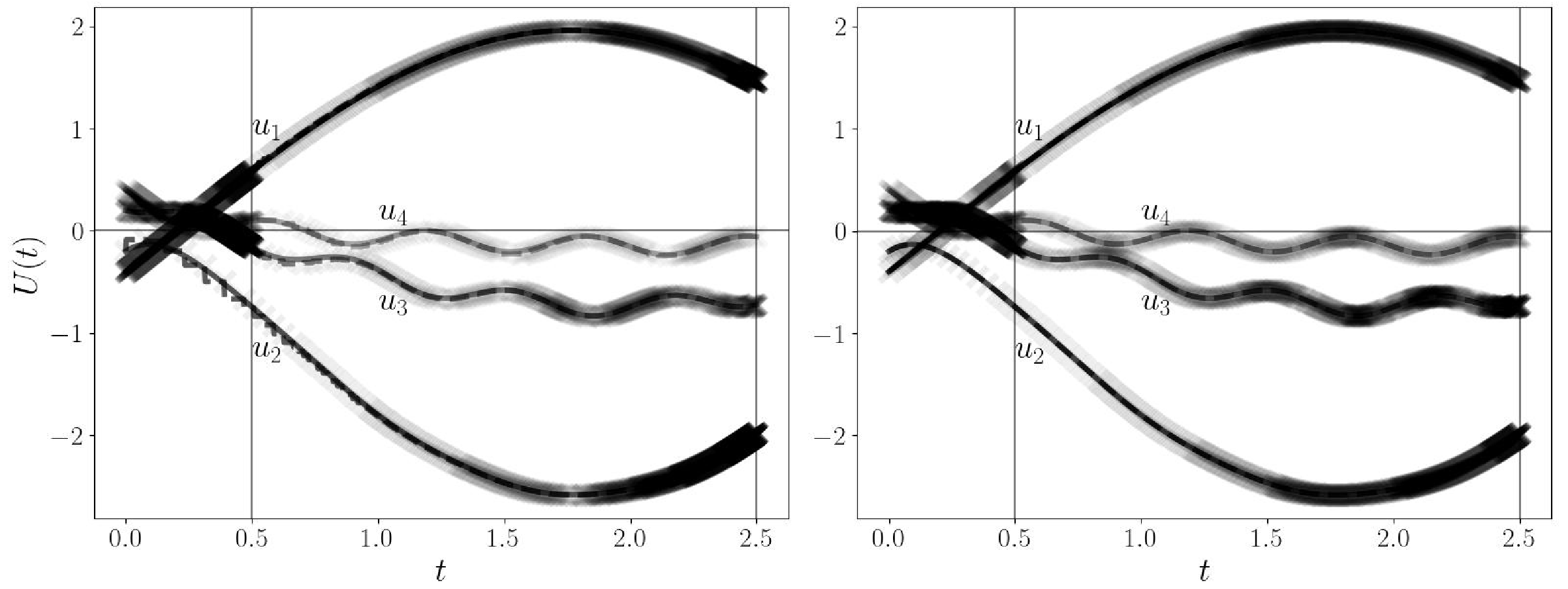}
		\caption{Test Case \ref{exp:6210} and final approximations for goal oriented grid refinement. The time steps are marked with an $X$ each. Left: explicit Euler scheme. Right: Crank--Nicolson scheme. The refinement adapts to the goal functional, here represented by a vertical line, as described above. It is also notable that the Crank--Nicolson scheme while having the same pattern of refinement as the explicit Euler scheme distributes the time steps slightly more evenly.}
		\label{fig:6230}
	\end{figure}
	
	%---------------------------
	\begin{experiment}  % TEST 3
		\label{exp:6310}
		%---------------------------
		On $\timeinterval := \lrb{0, 4}$, we investigate the initial value problem
		\begin{equation*}
			\begin{pmatrix} \dot{u}_1 \lrp{t}\\ \dot{u}_2 \lrp{t} \end{pmatrix} + \lrb{\begin{array}{cc}  5 & 2\\  1 &  2.5 \end{array}} \begin{pmatrix} u_1 \lrp{t}\\ u_2 \lrp{t} \end{pmatrix} = \begin{pmatrix} 10 \sin \lrp{t} + 0.1 \sin \lrp{10 t}\\ \sin \lrp{t} +\ \sin \lrp{10 t} \end{pmatrix},
			\qquad
			\begin{pmatrix}
				u_1 \lrp{0} \\ u_2 \lrp{0}
			\end{pmatrix}
			= \left( \begin{array}{r} -0.5 \\ 0.5\end{array} \right)
		\end{equation*}
		and QoI
		\begin{equation*}
			J \lrp{U} := u_1 \lrp{3} + u_2 \lrp{4} .
		\end{equation*}
		
		The results with Jacobi--splitting are shown and shortly summarized in Figure~\ref{fig:6310}. Additionally, we measure the number of dynamic iteration steps per refinement level, see Figure~\ref{fig:6330}. On the initial level, the maximum number of iterations is performed before dropping sharply. For the explicit Euler scheme, $\K_l$ generally grows with $l$ while the Crank--Nicolson scheme shows much less regularity. %\hfill $\Box$
		The reason later refinement levels require more dynamic iteration steps is because the discretization error is lower there. From a computational cost point of view it would be preferable to have a high number of iterations at the start but reduce them as the grid gets finer (and thus each iteration step gets more expensive).
		
		\begin{figure}[htb]
			\centering
			\includegraphics[width = \textwidth]{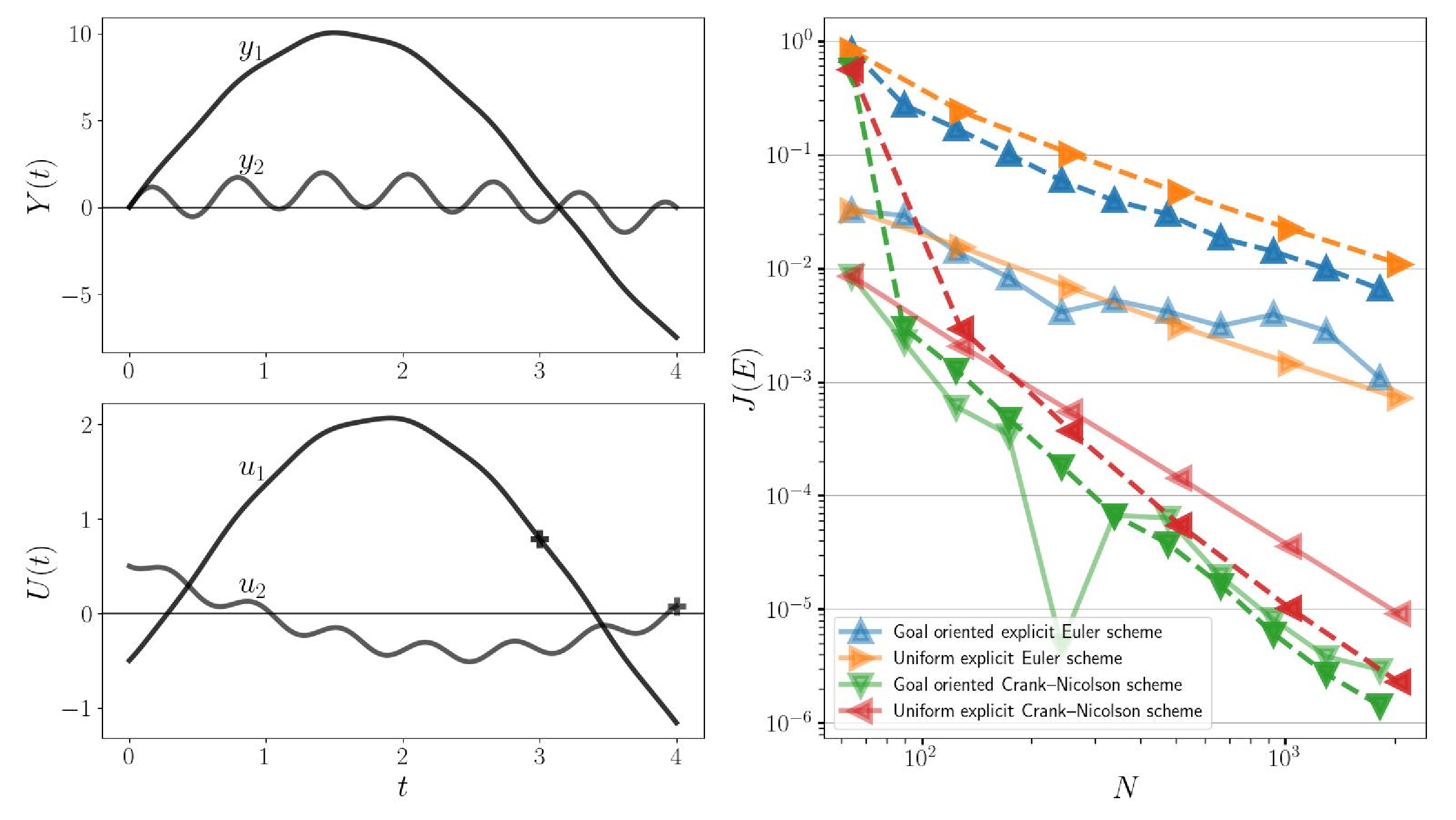}
			\caption{Test Case \ref{exp:6310}. Top left: right-hand side. Bottom left: analytical solution with points of interest marked by ''+''. Right: convergence plots for the goal oriented dynamic iteration refinements. The goal oriented approach offers less of an advantage here than in the previous tests. In particular, for the explicit Euler scheme, the goal oriented (blue, up marker) refinement performs worse than the uniform (orange, right marker) one. For the Crank--Nicolson scheme, goal oriented (green, down marker) still works better than uniform (red, left marker) but the difference is smaller than in other cases. The stronger coupling compared to Test Case \ref{exp:6110} makes it more difficult to exploit the different frequencies and points of interest of the components.}
			\label{fig:6310}
		\end{figure}
		
		\begin{figure}[htb]
			\centering
			\includegraphics[width = \textwidth]{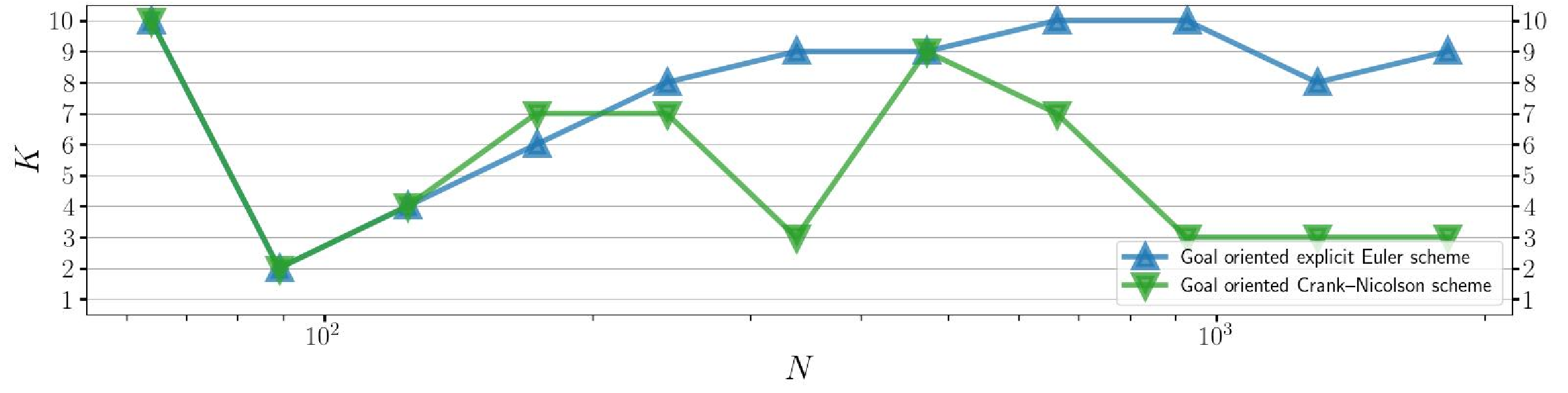}
			\caption{Test Case \ref{exp:6310}. Iteration steps ($K$) per refinement level for the goal oriented schemes. Explicit Euler (blue, up marker) scheme increases the number of iterations for finer meshes while the Crank--Nicolson scheme (green, down marker) does not shown a clear pattern.}
			\label{fig:6330}
		\end{figure}
	\end{experiment}
	
	\section{Conclusion}
	\label{sec:conclusion}
	In this paper, we have combined results from DI and DWR theories to construct an algorithm for the adaptive solution of time dependent systems of ODEs. Goal oriented error estimators are derived for both the splitting and discretization errors and these are used to control the number of dynamic iteration steps as well as to guide the refinement of the mesh.
	
	We have adopted a view of the entire DI process as a single system. This leads to a rigorous derivation of the adjoint problem where information flows in inverse direction compared to the primal problem. In particular, this means that the iterations of the dual problem have to be computed in reverse order. However, it is possible to significantly reduce the cost of this by reusing certain results.
	
	These first experiments show that the resulting scheme is advantageous compared to a naive uniform refinement. This applies especially to cases where there are large differences in the frequencies of the components and the time points of interest. Investigations of the mesh refinement pursued also reveal results that are intuitively sensible. The error estimators were reliable for the explicit Euler scheme but not fully reliable for the higher order Crank--Nicolson scheme. Nonetheless, they provide useful information for the purposes of mesh refinement.
	
	There are various possible avenues for further research. Besides higher order methods and different splitting methods, 
	a question of particular interest is whether any information gained by the goal oriented error estimation could be used to determine an optimal splitting.

	The discrete goal functional used in the present paper can also be an ideal use case for a windowing technique. In that case, the goal oriented error estimators could be used to find windows that contribute most to the overall error. Moreover, distributed QoIs are still missing in our investigations. 
	Finally, the method needs to be developed for a wider category of problems including PDEs or nonlinear problems. 
	
	\backmatter
	
	% \clearpage
	
	\section*{Declarations}
	\textbf{Conflict of Interest} The authors declare no conflict of interest.
	
	\noindent
	\textbf{Funding} The second and third author acknowledges support by the Deutsche Forschungsgemeinschaft (DFG, German Research Foundation) Project-ID 531152215 – CRC 1701.
	
	\noindent
	\textbf{Code Availability} The program code for the Algorithm as well as the scripts to produce the graphics presented is provided on \href{https://git.uni-wuppertal.de/eweyl/goal-oriented-partitioning}{GitLab}.
	
	\begin{appendices}
		
		\section{Proof of DWR Error Estimator}
		\label{sec:appendix050}	
		The proof is classical in goal oriented techniques and follows the arguments of the proof of \cite[Proposition 2.2]{becker2001optimal}. We define the primal and dual residuals for $\Kblock{V} \in \Kblock{\mathcal{V}}$, $\Kblock{W} \in \Kblock{\mathcal{U}}$:
		\begin{align*}
			\rho \lrp{\KblockDt{U}, \Kblock{V}}		& := \Kblock{g} \lrp{\Kblock{V}} - \Kblock{f} \lrp{\KblockDt{U}, \Kblock{V}},\\
			\rho^* \lrp{\Kblock{W}, \KblockDt{Z}}	& := \Kblock{J} \lrp{\Kblock{W}} - \Kblock{f} \lrp{\Kblock{W}, \KblockDt{Z}}.
		\end{align*}
		Note that the residuals vanish for any $\Kblock{V}\in \KblockDt{\mathcal{V}}$, $\Kblock{W} \in \KblockDt{\mathcal{U}}$.
		Using $\Kblock{U} \in \KblockDt{\mathcal{U}}$ and $\Kblock{Z} \in \KblockDt{\mathcal{V}}$, we obtain for the difference in the Lagrangian:
		\begin{align*}
			L \lrp{\Kblock{U}, \Kblock{Z}} - & L \lrp{\KblockDt{U}, \KblockDt{Z}}
			%& - \Kblock{J} \lrp{\KblockDt{U}} - \underbrace{\Kblock{g} \lrp{\KblockDt{Z}} + \Kblock{f} \lrp{\KblockDt{U}, \KblockDt{Z}}}_{= 0}\\
			= \Kblock{J} \lrp{\Kblock{U}} - \Kblock{J} \lrp{\KblockDt{U}} 
		\end{align*}
		and we have expressed the error in $\Kblock{J}$ as an error in the Lagrangian functional. Consequently, we have in our linear setting:
		\begin{align*}
			L &\lrp{\Kblock{U}, \Kblock{Z}} - L \lrp{\KblockDt{U}, \KblockDt{Z}}\\
			=	& \quad \ \frac{1}{2} \min_{\KblockDt{W} \in \KblockDt{\mathcal{U}}} \lrp{\pardif[L]{\Kblock{U}} \lrp{\KblockDt{U}, \KblockDt{Z}} \lrb{\Kblock{U} - \KblockDt{W}}}\\
			& + \frac{1}{2} \min_{\KblockDt{V} \in \KblockDt{\mathcal{V}}} \lrp{\pardif[L]{\Kblock{Z}} \lrp{\KblockDt{U}, \KblockDt{Z}} \lrb{\Kblock{Z} - \KblockDt{V}}} \\
			=	& \quad \ \frac{1}{2} \min_{\KblockDt{W} \in \KblockDt{\mathcal{U}}} \lrp{\rho^* \lrp{\Kblock{U} - \KblockDt{W}, \KblockDt{Z}}}\\
			& + \frac{1}{2} \min_{\KblockDt{V} \in \KblockDt{\mathcal{V}}} \lrp{\rho \lrp{\KblockDt{U}, \Kblock{Z} - \KblockDt{V}}} .
		\end{align*}
		Thus, for arbitrary $\KblockDt{V} \in \KblockDt{\mathcal{U}}$,
		\begin{align*}
			\Kblock{J} & \lrp{\Kblock{U}} - \Kblock{J} \lrp{\KblockDt{U}}\\
			\leq	& \quad \ \rho \lrp{\KblockDt{U}, \Kblock{Z} - \KblockDt{V}}\\
			=		& \quad \ \Kblock{g} \lrp{\Kblock{Z} - \KblockDt{V}} - \Kblock{f} \lrp{\KblockDt{U}, \Kblock{Z} - \KblockDt{V}}\\
			=		& {\int_\timeinterval \!\!\lrp{\Kblock{Y} \lrp{t} \!-\! \KblockDt{\dot{U}} \lrp{t} \!-\! \Kblock{B} \KblockDt{U} \lrp{t}} \cdot \lrp{\Kblock{Z} \lrp{t} - \KblockDt{V} \lrp{t}} \tn{d} t} ={\mu}^{\Dt} .
		\end{align*}
		This completes the proof.
	\end{appendices}

\end{document}